\newtheorem{thm}{Theorem}
\newtheorem{lemm}[thm]{Lemma}
\newtheorem{rem}[thm]{Remark}
\begin{document}

\title{Analyticity and criticality results for the eigenvalues of the biharmonic operator}
\author{Davide Buoso\footnote{Politecnico di Torino. Email: davide.buoso@polito.it.}}
\date{ }
\maketitle

\noindent
{\bf Abstract:}
We consider the eigenvalues of the biharmonic operator subject to several homogeneous boundary conditions (Dirichlet, Neumann, Navier, Steklov). We show that simple eigenvalues and elementary symmetric functions of multiple eigenvalues are real analytic, and provide Hadamard-type formulas for the corresponding shape derivatives. After recalling the known results in shape optimization, we prove that balls are always critical domains under volume constraint.

\vspace{11pt}

\noindent
{\bf Keywords:} biharmonic operator; boundary value problems; Steklov; plates; eigenvalues; perturbations; Hadamard formulas; isovolumetric perturbations; shape criticality.

\vspace{6pt}
\noindent
{\bf 2010 Mathematics Subject Classification:} Primary 35J30; Secondary 35B20, 35J40, 35N05, 35P15, 74K20.

\section{Introduction}
\label{intro}

In this paper we consider eigenvalue problems for the biharmonic operator subject to several homogeneous boundary conditions in bounded domains $\Omega$ in $\mathbb R^N$, $N\ge2$. Note that such problems arise in the study of vibrating plates within the so-called Kirchhoff-Love model (see e.g., \cite{rayleigh}). In particular, we consider the following equation
\begin{equation}
\label{biha}
\Delta^2u-\tau\Delta u=\lambda u,{\rm\ in\ }\Omega,
\end{equation}
where $\tau$ is a non-negative constant related to the lateral tension of the plate. As for the boundary conditions, we are interested in Dirichlet boundary conditions
	\begin{equation}
	\label{dir}
	u=\frac{\partial u}{\partial\nu}=0\ \mathrm{on\ }\partial\Omega,
	\end{equation}
which are related to clamped plates, Navier boundary conditions
\begin{equation}
\label{nav}
u=(1-\sigma)\frac{\partial^2u}{\partial\nu^2}+\sigma\Delta u=0\ \mathrm{on\ }\partial\Omega,
\end{equation}
which are related to hinged plates, and Neumann boundary conditions
	\begin{equation}
	\label{neu}
	(1-\sigma)\frac{\partial^2u}{\partial\nu^2}+\sigma\Delta u=\tau\frac{\partial u}{\partial\nu}-\frac{\partial\Delta u}{\partial\nu}-
		(1-\sigma)	\mathrm{div}_{\partial\Omega}\left(\nu^tD^2u\right)_{\partial\Omega}=0\ \mathrm{on\ }\partial\Omega,
		\end{equation}
which are related to free plates. Note that $\sigma$ denotes the Poisson ratio of the material, typically $0\le\sigma\le0.5$. We recall that the conditions (\ref{neu}) have been known for long time only in dimension $N=2$ (see e.g., \cite{gine, verchota}), while the general case first appeared in \cite{chasman11} (see also \cite{chasman15}). We recall here that, given a vector function $f$, its tangential component is defined as $f_{\partial\Omega}=f-(f\cdot\nu)\nu$, and the tangential divergence operator is $\mathrm{div}_{\partial\Omega}f=\mathrm{div}f-\frac{\partial f}{\partial\nu}\cdot\nu$. 

We also consider Steklov-type problems for the biharmonic operator. Note that the first one to appear was the following
\begin{equation}
\label{ks}
\left\{\begin{array}{ll}
	\Delta^2u=0, & \text{in\ }\Omega,\\
	u=0, & \text{on\ }\partial\Omega,\\
\Delta u=\lambda \frac{\partial u}{\partial\nu}, & \text{on\ }\partial\Omega,
	\end{array}\right.
\end{equation}
and it was introduced in \cite{kuttler68}. Problem (\ref{ks}) has proved itself to be quite strange with respect to other Laplacian-related eigenvalue problem, at least concerning shape optimization results. In fact, differently from the classical Steklov problem where the interesting problem is the maximization of eigenvalues under volume constraint, here one searches for minimizers and, strikingly, the ball is not the optimal shape for the first eigenvalue (at least in dimension $N=2$), as shown in \cite{kuttler72}. Nevertheless, in \cite{buga11} the authors can prove that, among all convex domain of fixed measure there exists a minimizer, but nothing is known about the optimal shape, or if the convexity assumption can be relaxed. We also refer to \cite{antunes, begami, bufega} for other results on problem (\ref{ks}).

Another Steklov problem for the biharmonic operator which has appeared very recently in \cite{bupro} (see also \cite{buproimse}) is the following
\begin{equation}
\label{bp}
\left\{\begin{array}{ll}
	\Delta^2u-\tau\Delta u=0, & \text{in\ }\Omega,\\
	\frac{\partial^2u}{\partial\nu^2}=0,& \text{on\ }\partial\Omega,\\
	\tau\frac{\partial u}{\partial\nu}-\frac{\partial\Delta u}{\partial\nu}-\mathrm{div}_{\partial\Omega}\left(\nu^tD^2u\right)_{\partial\Omega}=\lambda u,&\mathrm{on\ }\partial\Omega.
	\end{array}\right.
	\end{equation}
In contrast with problem (\ref{ks}), problem (\ref{bp}) presents several spectral features resembling those of the Steklov-Laplacian. As shown in \cite{bupro}, problem (\ref{bp}) can be viewed as a limiting Neumann problem via mass concentration arguments (cf.\ \cite{lapro}), and moreover, for any fixed $\tau>0$, the maximizer of the first positive eigenvalue among all bounded smooth domains is the ball.
	
In this paper we are interested in analyticity properties of the eigenvalues of problems (\ref{biha})-(\ref{bp}). This type of analysis was first done by Lamberti and Lanza de Cristoforis in \cite{lala2004}, where they study regularity properties of the elementary symmetric functions of the eigenvalues of the Laplace operator subject to Dirichlet boundary conditions. Note that in general, when dealing with eigenvalues splitting from a multiple eigenvalue, bifurcation phenomena may occur, and the use of symmetric functions of the eigenvalues permits to bypass such situations. The techniques in \cite{lala2004} were later used to treat other types of boundary conditions (see \cite{lambertisteklov,lala2007}) and even other operators (see \cite{buosoimse, bulareis,bulaproc}). As for the biharmonic operator, this kind of analysis has been already carried out in several specific cases, see \cite{bula2013,buosohinged,bupro,buproimse}. Our aim here is to treat those cases altogether in order to give a general overview.

After proving that the elementary symmetric functions of the eigenvalues are analytic upon domain perturbations, we compute their shape differential. Following the lines of \cite{lalacri}, by means of the Lagrange Multiplier Theorem, we can  show that the ball is a critical domain under volume constraint for any of the elementary symmetric functions of the eigenvalues of problems (\ref{biha})-(\ref{bp}). We observe that, regarding problem (\ref{ks}), such a result was already obtained in \cite{buga11} but only for the first eigenvalue. We remark that the question of criticality of domains is strictly related with shape optimization problems, where the minimizing (resp.\ maximizing)  domain has to be found in a class of fixed volume ones. This type of problems for the eigenvalues of the biharmonic operator have been solved only in very specific cases, the optimal domain for the first eigenvalue being the ball (see \cite{ash,bupro,chasman11,chasman15,nadir}). As we have said above, for problem (\ref{ks}) the ball has been proved not to be the minimizer, nevertheless it still is a critical domain (cf.\ Theorem \ref{lepallesys}).

The paper is organized as follows. Section \ref{prel} is devoted to some preliminaries. In Section \ref{analitico} we examine the problem of shape differentiability of the eigenvalues. We consider problem (\ref{plateweak}) in $\phi(\Omega)$ and pull it back to $\Omega$, where $\phi$ belongs to a suitable class of diffeomorphisms. We also derive Hadamard-type formulas for the elementary symmetric functions of the eigenvalues. In Section \ref{critico} we consider the problem of finding critical points for such functions under volume constraint. We provide a characterization for the critical domains, and show that, for all the problems considered, balls are critical domains for all the elementary symmetric functions of the eigenvalues. Finally, in Section \ref{tech} we prove some technical results.


\section{Preliminaries}
\label{prel}

Let $N\in\mathbb{N}$, $N\ge2$, and let $\Omega$ be a bounded open set in $\mathbb{R}^N$ of class $C^1$. By $H^k(\Omega)$, $k\in\mathbb N$, we denote the Sobolev space of functions in $L^2(\Omega)$ with derivatives up to order $k$ in $L^2(\Omega)$, and by $H^k_0(\Omega)$ we denote the closure in $H^k(\Omega)$ of the space of $C^{\infty}$-functions with compact support in $\Omega$.

Let also $\tau\ge0$, $-\frac 1 {N-1}<\sigma<1$. We consider the following bilinear form on $H^2(\Omega)$
\begin{equation}
\label{prodotto}
P=(1-\sigma)M+\sigma B+\tau L,
\end{equation}
where
$$
M[u][v]=\int_{\Omega}D^2u:D^2v dx,\ \ B[u][v]=\int_{\Omega}\Delta u\Delta vdx,
$$
and
$$
L[u][v]=\int_{\Omega}\nabla u\cdot\nabla vdx,
$$
for any $u,v\in H^2(\Omega)$, where we denote by $D^2u:D^2v$ the Frobenius product $D^2u:D^2v=\sum_{i,j=1}^N\frac{\partial^2u}{\partial x_i\partial x_j}\frac{\partial^2v}{\partial x_i\partial x_j}$. We also consider the following bilinear forms on $H^2(\Omega)$
$$
J_1[u][v]=\int_{\Omega}uv dx,\ \ J_2[u][v]=\int_{\partial\Omega}\frac{\partial u}{\partial\nu}\frac{\partial v}{\partial\nu}d\sigma,\ \ J_3[u][v]=\int_{\partial\Omega}uvd\sigma,
$$
for any $u,v\in H^2(\Omega)$, where we denote by $\nu$ the unit outer normal vector to $\partial\Omega$, and by $d\sigma$ the area element.

Using this notation, problems \eqref{biha}-\eqref{neu} can be stated in the following weak form
$$
P[u][v]=\lambda J_1[u][v],\ \forall v\in V(\Omega),
$$
where $V(\Omega)$ is either $H^2_0(\Omega)$ (for the Dirichlet problem), or $H^2(\Omega)\cap H^1_0(\Omega)$ (for the Navier problem), or $H^2(\Omega)$ (for the Neumann problem). Here and in the sequel the bilinear forms defined on $V(\Omega)$ will be understood also as linear operators acting from $V(\Omega)$ to its dual.

As for Steklov-type problems, we shall consider their generalizations according to the definition of $P$. In particular, regarding problem \eqref{ks}, we consider the following generalization
\begin{equation}
\label{ksgen}
\left\{\begin{array}{ll}
	\Delta^2u-\tau \Delta u=0, & \text{in\ }\Omega,\\
	u=0, & \text{on\ }\partial\Omega,\\
(1-\sigma)\frac{\partial^2u}{\partial\nu^2}+\sigma\Delta u=\lambda \frac{\partial u}{\partial\nu}, & \text{on\ }\partial\Omega,
	\end{array}\right.
\end{equation}
whose weak formulation is
$$
P[u][v]=\lambda J_2[u][v],\ \forall v\in H^2(\Omega)\cap H^1_0(\Omega).
$$
We also consider the following generalization of problem \eqref{bp}
\begin{equation}
\label{bpgen}
\left\{\begin{array}{ll}
	\Delta^2u-\tau\Delta u=0, & \text{in\ }\Omega,\\
	(1-\sigma)\frac{\partial^2u}{\partial\nu^2}+\sigma\Delta u=0,& \text{on\ }\partial\Omega,\\
	\tau\frac{\partial u}{\partial\nu}-\frac{\partial\Delta u}{\partial\nu}-(1-\sigma)\mathrm{div}_{\partial\Omega}\left(\nu^tD^2u\right)_{\partial\Omega}=\lambda u,&\mathrm{on\ }\partial\Omega,
	\end{array}\right.
	\end{equation}
whose weak formulation is
$$
P[u][v]=\lambda J_3[u][v],\ \forall v\in H^2(\Omega).
$$

Using a unified notation, we can therefore write all the problems we are considering as
\begin{equation}
\label{plateweak}
P[u][v]=\lambda J_i[u][v],\ \forall v\in V(\Omega),
\end{equation}
where:
\begin{itemize}
\item for the Dirichlet problem \eqref{biha}, \eqref{dir} we set $i=1$, $V(\Omega)=H^2_0(\Omega)$;
\item for the Navier problem \eqref{biha}, \eqref{nav} we set $i=1$, $V(\Omega)=H^2(\Omega)\cap H^1_0(\Omega)$;
\item for the Neumann problem \eqref{biha}, \eqref{neu} we set $i=1$, $V(\Omega)=H^2(\Omega)$;
\item for the Steklov problem \eqref{ksgen} we set $i=2$, $V(\Omega)=H^2(\Omega)\cap H^1_0(\Omega)$;
\item for the Steklov problem \eqref{bpgen} we set $i=3$, $V(\Omega)=H^2(\Omega)$.
\end{itemize}

It is clear that both the Neumann problem \eqref{biha}, \eqref{neu} and the Steklov problem \eqref{bpgen} have non-trivial kernel. In particular, if $\tau>0$ then both kernels are given by the constant functions, while if $\tau=0$ the kernels have dimension $N+1$ including also the coordinate functions $x_1,\dots,x_N$ (cf.\ \cite[Theorem 3.8]{bupro}). For this reason, we will restrict our attention to the case $\tau>0$ and consider instead $V(\Omega)=H^2(\Omega)/\mathbb R$ for problems \eqref{biha}, \eqref{neu} and \eqref{bpgen} (the case $\tau=0$ being similar).

With this choice of the space $V(\Omega)$, it is possible to show that the bilinear form $P$ defines a scalar product on $V(\Omega)$ which is equivalent to the standard one. We shall therefore consider $V(\Omega)$ as endowed with such a scalar product.

It is easily seen that $P$, considered as an operator acting from $V(\Omega)$ to its dual, is a linear homeomorphism. In particular, we can define
\begin{equation}
\label{inverse}
T_i=P^{(-1)}\circ J_i,
\end{equation}
for $i=1,2,3$. We have the following

\begin{thm}
Let $-\frac{1}{N-1}<\sigma<1$, $\tau>0$. Let $\Omega$ be a bounded domain in $\mathbb{R}^N$ of class $C^1$. The operator\ $T_i$ defined in \eqref{inverse} is a non-negative compact selfadjoint operator on the Hilbert space $V(\Omega)$. Its spectrum is discrete and consists of a decreasing sequence of positive eigenvalues of finite multiplicity converging to zero. Moreover, the equation $T_iu=\mu u$ is satisfied for some $u\in V(\Omega)$, $\mu>0$  if and only if equation (\ref{plateweak})  is satisfied with $0\neq\lambda=\mu^{-1}$ for any $v\in V(\Omega)$. In particular, the eigenvalues of problem \eqref{plateweak} can be arranged in a diverging sequence
$$
0<\lambda_1[\Omega]\le\lambda_2[\Omega]\le\cdots\le\lambda_k[\Omega]\le\cdots,
$$
where all the eigenvalues are repeated according to their multiplicity, and the following variational characterization holds
$$
\lambda_k[\Omega]=\min_{\substack{E\le V(\Omega) \\ {\rm dim\ }E=k}}\max_{\substack{u\in E \\ J_i[u][u]\neq0}}\frac{P[u][u]}{J_i[u][u]}.
$$
\end{thm}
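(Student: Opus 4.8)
The plan is to verify the hypotheses of the spectral theorem for compact self-adjoint operators on the Hilbert space $V(\Omega)$, where $V(\Omega)$ is equipped with the scalar product $P[\cdot][\cdot]$, whose equivalence to the standard one has already been established. First I would check that $T_i=P^{(-1)}\circ J_i$ is well-defined and bounded: since $J_i$ is a bounded bilinear form on $H^2(\Omega)$ (continuity of $J_1$ is immediate, and $J_2,J_3$ are continuous by the trace theorem $H^2(\Omega)\hookrightarrow H^{3/2}(\partial\Omega)\hookrightarrow L^2(\partial\Omega)$ together with $H^2(\Omega)\hookrightarrow H^1(\Omega)$ for the normal-derivative trace), it induces a bounded operator from $V(\Omega)$ to its dual, and composing with the homeomorphism $P^{(-1)}$ yields a bounded operator on $V(\Omega)$.

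Next I would establish self-adjointness and non-negativity. For $u,v\in V(\Omega)$ one computes, using the definition of $T_i$ and the symmetry of $J_i$,
$$
P[T_iu][v]=J_i[u][v]=J_i[v][u]=P[T_iv][u]=P[u][T_iv],
$$
so $T_i$ is symmetric with respect to the scalar product $P$, hence self-adjoint since it is bounded; and $P[T_iu][u]=J_i[u][u]\ge0$ gives non-negativity. Compactness is the one genuinely analytic point: $T_i$ factors as $V(\Omega)\to$ (some space) $\to V(\Omega)$ through a compact embedding. For $i=1$ this is the Rellich--Kondrachov compactness of $H^2(\Omega)\hookrightarrow L^2(\Omega)$ (valid on $C^1$, in fact Lipschitz, domains); for $i=2,3$ it is the compactness of the trace embedding $H^2(\Omega)\hookrightarrow L^2(\partial\Omega)$ (resp. of the normal-derivative trace into $L^2(\partial\Omega)$), which again holds on $C^1$ domains. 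Concretely: if $u_n\rightharpoonup u$ weakly in $V(\Omega)$, then $J_i[u_n][\cdot]\to J_i[u][\cdot]$ strongly in $V(\Omega)^*$ by compactness of the relevant embedding, and applying the homeomorphism $P^{(-1)}$ gives $T_iu_n\to T_iu$ strongly in $V(\Omega)$.

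With $T_i$ compact, self-adjoint and non-negative, the Hilbert--Schmidt spectral theorem gives a decreasing sequence of eigenvalues $\mu_1\ge\mu_2\ge\cdots\to0$ of finite multiplicity; they are strictly positive because $T_iu=0$ forces $J_i[u][v]=0$ for all $v$, and for each of the five problems the coercivity/injectivity built into the choice of $V(\Omega)$ (recall the kernels of the Neumann and problem~\eqref{bpgen} forms were quotiented out by passing to $H^2(\Omega)/\mathbb{R}$, and for the Dirichlet/Navier/Steklov cases $J_i[u][v]=0\ \forall v$ already forces $u=0$ in $V(\Omega)$) rules out a zero eigenvalue. The equivalence $T_iu=\mu u \iff P[u][v]=\mu^{-1}J_i[u][v]\ \forall v$, $\mu\neq0$, is immediate from unwinding the definition $P[T_iu][v]=J_i[u][v]$ and the invertibility of $P$. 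Setting $\lambda_k[\Omega]=\mu_{k}^{-1}$ (after the usual re-indexing that converts the decreasing $\mu_k$ into the increasing $\lambda_k$, repeated with multiplicity) yields the diverging sequence, and the min-max characterization of $\lambda_k[\Omega]$ follows from the Courant--Fischer formula applied to $T_i$ on $(V(\Omega),P)$, rewritten via $P[T_iu][u]=J_i[u][u]$. The main obstacle is the compactness step, and within it the only case requiring care is the one where $J_i$ lives on the boundary: there one must invoke compactness of the trace operator $H^2(\Omega)\hookrightarrow L^2(\partial\Omega)$ on $C^1$ domains rather than the interior Rellich theorem; everything else is routine bookkeeping.
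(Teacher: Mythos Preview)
Your approach is essentially the paper's own: the same one-line computation $P[T_iu][v]=J_i[u][v]$ for self-adjointness, compactness of $J_i$ via compact embeddings, then the spectral theorem. The paper's proof is in fact terser than yours --- it simply asserts that $J_i$ is compact and calls the rest ``straightforward'' --- so your write-up is a faithful expansion.

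One claim, however, is wrong and should be dropped: you assert that $T_i$ has trivial kernel in all five cases, arguing that $J_i[u][v]=0$ for all $v$ forces $u=0$ in $V(\Omega)$. For the two Steklov problems this fails. When $i=2$ and $V(\Omega)=H^2(\Omega)\cap H^1_0(\Omega)$, the condition $J_2[u][v]=0$ for all $v$ gives $\partial u/\partial\nu=0$ on $\partial\Omega$, so $u\in H^2_0(\Omega)$, which is infinite-dimensional; when $i=3$ and $V(\Omega)=H^2(\Omega)/\mathbb{R}$, the condition gives $u$ constant on $\partial\Omega$, again leaving an infinite-dimensional kernel. Fortunately this does not matter: the theorem only asserts that the \emph{nonzero} spectrum is a sequence of positive eigenvalues accumulating at $0$, and that follows directly from non-negativity and compactness (plus the obvious fact that each $T_i$ has infinite rank, so the sequence is genuinely infinite). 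Simply delete the kernel discussion and the argument goes through.
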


\begin{proof}
For the selfadjointness, it suffices to observe that
$$
< T_i[u],v>=<P^{(-1)}\circ J_i [u],v>=P[P^{(-1)}\circ J_i [u]][v]=J_i[u][v],
$$
for any $u,v\in V(\Omega)$. For the compactness, just observe that the operator $J_i$ is compact. The remaining statements are straightforward.
\end{proof}

\begin{rem}
As we have said in Section \ref{intro}, in applications $0\le\sigma\le0.5$. However, there are examples of materials with high or negative Poisson ratio, namely ($N=2$)
$$
-1<\sigma<1.
$$
In general dimension we choose $-\frac 1 {N-1}<\sigma<1$. This is due to the fact that, thanks to the inequality
$$
|D^2u|^2\ge\frac 1 N (\Delta u)^2\ \forall u\in H^2(\Omega),
$$
then, for $\sigma$ in that range, the operator $P$ turns out to be coercive. We also remark that, following the arguments in \cite[Section 3]{bupro}, the Steklov problem \eqref{bpgen} can be seen as a limiting Neumann problem of the type \eqref{biha}, \eqref{neu} with a mass distribution concentrating to the boundary.
\end{rem}

We note that problem \eqref{ks} is obtained for $\sigma=1$, which is out of our range. Under some additional regularity assumptions, for instance $\Omega\in C^2$ (see e.g., \cite{buga11} for general conditions), then the operator becomes coercive and all the results here and in the sequel apply as well. The same remains true also for the Navier problem \eqref{biha}, \eqref{nav}, which for $\sigma=1$ reads
\begin{equation*}
\left\{\begin{array}{ll}
\Delta^2u-\tau\Delta u=\lambda u, & {\rm in\ }\Omega,\\
u=\Delta u=0, & {\rm on\ }\partial\Omega,
\end{array}\right.
\end{equation*}
which has been extensively studied in the case $\tau=0$ (we refer to \cite{babu, ggs, parini, sweers} and the references therein).

The situation is instead completely different in the case of Neumann boundary conditions with $\sigma=1$, namely
\begin{equation}\label{badneu}
\left\{\begin{array}{ll}
	\Delta^2u-\tau \Delta u=\lambda u, & \text{in\ }\Omega,\\
	\Delta u=\frac{\partial\Delta u}{\partial\nu}=0, & \text{on\ }\partial\Omega.
\end{array}\right.
\end{equation}
It is easy to see that problem \eqref{badneu} has an infinite dimensional kernel since all harmonic functions belong to the eigenspace associated with the eigenvalue $\lambda=0$. In particular, the boundary conditions do not satisfy the complementing conditions, see \cite{adn,ggs}. We refer to \cite{prozkala} for considerations on the spectrum of problem \eqref{badneu}.


\section{Analyticity of the eigenvalues and Hadamard formulas}
\label{analitico}

The study of the dependence of the eigenvalues of elliptic operators on the domain has nowadays become a classical problem in the field of perturbation theory. Shape continuity of the eigenvalues has been known for long time (\cite{cohi}), and can also be improved to H\"older or Lipschitz continuity using stability estimates as in \cite{buosotesi,buda,bula,bulahigh,bulahighsh}. However, while the continuity holds for all the eigenvalues, only the simple ones enjoy an analytic dependence (see e.g., \cite{henry}). On the other hand, when the eigenvalue is multiple, bifurcation phenomena occur, so that, if the perturbation is parametrized by one real variable, then the eigenvalues are described by suitable analytic branches (cf.\ \cite[Theorem 1]{rellich}). Unfortunately, if the family of perturbations is not parametrized by one real variable, one cannot expect the eigenvalues to split into analytic branches anymore. In this case, the use of elementary symmetric functions of the eigenvalues (see \cite{lala2004,lala2007}) has the advantage of bypassing splitting phenomena, in fact such functions turn out to be analytic.

To this end, we shall consider problem (\ref{plateweak}) in a family of open sets parametri\-zed by suitable diffeomorphisms $\phi $
defined on a bounded open set $\Omega $ in ${\mathbb{R}}^N$ of class $C^1$. Namely, we set
$$
{\mathcal{A}}_{\Omega }=\biggl\{\phi\in C^2(\overline\Omega\, ; {\mathbb{R}}^N ):\ \inf_{\substack{x_1,x_2\in \overline\Omega \\ x_1\ne x_2}}\frac{|\phi(x_1)-\phi(x_2)|}{|x_1-x_2|}>0 \biggr\},
$$
where $C^2(\overline\Omega\, ; {\mathbb{R}}^N )$ denotes the space of all functions from $\overline\Omega $ to ${\mathbb{R}}^N$ of class $C^2$.  Note that if $\phi \in {\mathcal{A}}_{\Omega }$ then $\phi $ is injective, Lipschitz continuous and $\inf_{\overline\Omega }|{\rm det }\nabla \phi |>0$. Moreover, $\phi (\Omega )$ is a bounded open set of class $C^1$ and the inverse map $\phi^{(-1)}$ belongs to  ${\mathcal{A}}_{\phi(\Omega )}$.  Thus it is natural to consider problem (\ref{plateweak}) on $\phi (\Omega )$ and study  the dependence of $\lambda_k[\phi (\Omega )]$ on $\phi \in {\mathcal{A}}_{\Omega }$. To do so, we endow the space $C^2(\overline\Omega\, ; {\mathbb{R}}^N )$ with its usual norm.
Note that ${\mathcal{A}}_{\Omega }$ is an open set in  $C^2(\overline\Omega\, ;{\mathbb{R}}^N )$, see \cite[Lemma~3.11]{lala2004}.
    Thus, it makes sense to  study differentiability and analyticity properties of the maps $\phi \mapsto \lambda_k[\phi (\Omega )]$ defined for $\phi \in {\mathcal{A}}_{\Omega }$.
   For simplicity, we write  $\lambda_k[\phi ]$ instead of $\lambda_k[\phi (\Omega )]$.
   We fix a finite set of indexes $F\subset \mathbb{N}$
and we consider those maps $\phi\in {\mathcal{A}}_{\Omega }$ for which the eigenvalues
with indexes in $F$  do not coincide with eigenvalues with indexes not
in $F$; namely we set
$$
{\mathcal { A}}_{F, \Omega }= \left\{\phi \in {\mathcal { A}}_{\Omega }:\
\lambda_k[\phi ]\ne \lambda_l[\phi],\ \forall\  k\in F,\,   l\in \mathbb{N}\setminus F
\right\}.
$$
It is also convenient to consider those maps $\phi \in {\mathcal { A}}_{F, \Omega } $ such that all the eigenvalues with index in $F$
 coincide and set
$$
\Theta_{F, \Omega } = \left\{\phi\in {\mathcal { A}}_{F, \Omega }:\ \lambda_{k_1}[\phi ]=\lambda_{k_2}[\phi ],\, \
\forall\ k_1,k_2\in F  \right\} .
$$

 For $\phi \in {\mathcal { A}}_{F, \Omega }$, the elementary symmetric functions of the eigenvalues with index in $F$ are defined by
\begin{equation*}
\Lambda_{F,s}[\phi ]=\sum_{ \substack{ k_1,\dots ,k_s\in F\\ k_1<\dots <k_s} }
\lambda_{k_1}[\phi ]\cdots \lambda_{k_s}[\phi ],\ \ \ s=1,\dots , |F|.
\end{equation*}

We have the following 

\begin{thm}
\label{duesettesys}
Let $\Omega $ be a bounded open set in ${\mathbb{R}}^N$ of class $C^1$, $\tau>0$, $-\frac1{N-1}<\sigma<1$, and  $F$ be a finite set in  ${\mathbb{N}}$. 
The set ${\mathcal { A}}_{F, \Omega }$ is open in
$\mathcal{A}_{\Omega}$, and the real-valued maps 
$ \Lambda_{F,s}$ are real-analytic on  ${\mathcal { A}}_{F, \Omega }$, for all $s=1,\dots , |F|$. 
Moreover, if $\tilde \phi\in \Theta_{F, \Omega }  $ is such that the eigenvalues $\lambda_k[\tilde \phi]$ assume the common value $\lambda_F[\tilde \phi ]$ for all $k\in F$, and  $\tilde \phi (\Omega )$ is of class $C^{4}$ then  the Fr\'{e}chet differential of the map $\Lambda_{F,s}$ at the point $\tilde\phi $ is delivered by the formula
\begin{equation}
		\label{derivdsys}
		d|_{\phi=\tilde{\phi}}(\Lambda_{F,s})[\psi]
			=	\lambda_F^s[\tilde{\phi}]\binom{|F|-1}{s-1}
			\sum_{l=1}^{|F|}
			\int_{\partial\tilde{\phi}(\Omega)} G(v_l) (\psi\circ\tilde{\phi}^{(-1)})\cdot\nu d\sigma,
			\end{equation}
for all $\psi\in C^1(\overline\Omega;\mathbb{R}^N)$, where $\{v_l\}_{l\in F}$ is an orthonormal basis in $V(\tilde \phi (\Omega ))$ of the eigenspace associated with $\lambda_F[\tilde \phi]$ (the orthonormality being taken with respect to \eqref{prodotto}), and:
\begin{itemize}
\item $G(v)=-\left(\frac{\partial^2v}{\partial\nu^2}\right)^2$ for the Dirichlet problem;

\item $G(v)=(1-\sigma)|D^2v|^2+\sigma(\Delta v)^2+\tau|\nabla v|^2-\lambda_F[\tilde\phi(\Omega)]v^2$ for the Neumann problem;

\item $G(v)=2\frac{\partial v}{\partial\nu}\left(\frac{\partial\Delta v}{\partial\nu}+(1-\sigma){\rm div}_{\partial\tilde\phi(\Omega)}(\nu\cdot D^2v)_{\partial\tilde\phi(\Omega)}\right)+(1-\sigma)|D^2v|^2+\sigma(\Delta v)^2+\tau\left(\frac{\partial v}{\partial\nu}\right)^2$ for the Navier problem;

\item $G(v)=2\frac{\partial v}{\partial\nu}\left(\frac{\partial\Delta v}{\partial\nu}+(1-\sigma){\rm div}_{\partial\tilde\phi(\Omega)}(\nu\cdot D^2v)_{\partial\tilde\phi(\Omega)}\right)+(1-\sigma)|D^2v|^2+\sigma(\Delta v)^2+\tau\left(\frac{\partial v}{\partial\nu}\right)^2-\lambda_F[\tilde\phi(\Omega)]K\left(\frac{\partial v}{\partial\nu}\right)^2-\lambda_F[\tilde\phi(\Omega)]\frac{\partial\ }{\partial\nu}\left(\frac{\partial v}{\partial\nu}\right)^2$ for the Steklov problem \eqref{ksgen};

\item $G(v)=(1-\sigma)|D^2v|^2+\sigma(\Delta v)^2+\tau|\nabla v|^2-\lambda_F[\tilde\phi(\Omega)]Kv^2-\lambda_F[\tilde\phi(\Omega)]\frac{\partial(v)^2}{\partial\nu}$ for the Steklov problem \eqref{bpgen},
\end{itemize}
where by $K$ we denote the mean curvature of $\partial\tilde{\phi}(\Omega)$.

\end{thm}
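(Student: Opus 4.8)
The plan is to follow the functional-analytic scheme of Lamberti and Lanza de Cristoforis \cite{lala2004} (in the form already used for the biharmonic operator in \cite{buosohinged,bupro}), reducing the problem on the moving domain $\phi(\Omega)$ to a fixed setting on $\Omega$, and then to pass from the resulting ``bulk'' formula for the differential to the boundary formula \eqref{derivdsys} by integration by parts. First I would pull problem \eqref{plateweak} on $\phi(\Omega)$ back to $\Omega$ via $u\mapsto u\circ\phi$. Since $\phi\in\mathcal A_\Omega$ is a bi-Lipschitz map of class $C^2$ with $\inf_{\overline\Omega}|\det\nabla\phi|>0$, composition with $\phi$ is a linear homeomorphism of $H^2(\phi(\Omega))$ onto $H^2(\Omega)$ preserving the vanishing of the trace and of the normal derivative, hence it carries $V(\phi(\Omega))$ onto $V(\Omega)$. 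Under this change of variables the forms $P$, $J_i$ become $\phi$-dependent forms $P_\phi$, $J_{i,\phi}$ on the fixed space $V(\Omega)$, whose coefficients are polynomial expressions in the entries of $\nabla\phi$, in $(\det\nabla\phi)^{-1}$, and, for $i=2,3$, in the pulled-back surface density $|\det\nabla\phi|\,|(\nabla\phi)^{-T}\nu|$ and the pulled-back normal. Because matrix inversion and $t\mapsto t^{-1}$ (away from $0$) are real-analytic and $\mathcal A_\Omega$ is open, the maps $\phi\mapsto P_\phi$ and $\phi\mapsto J_{i,\phi}$ are real-analytic from $\mathcal A_\Omega$ into $\mathcal L(V(\Omega),V(\Omega)^{*})$; moreover $P_\phi$ remains coercive in a neighbourhood of any fixed $\phi$. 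Consequently $\phi\mapsto T_{i,\phi}=P_\phi^{(-1)}\circ J_{i,\phi}$ (cf.\ \eqref{inverse}) is a real-analytic family of compact operators on $V(\Omega)$, selfadjoint with respect to the equivalent scalar product $P_\phi$, whose nonzero eigenvalues are the $1/\lambda_k[\phi]$.

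With this in hand, the openness of $\mathcal A_{F,\Omega}$ in $\mathcal A_\Omega$ follows from the continuity of the eigenvalues (a consequence of the continuity of $\phi\mapsto T_{i,\phi}$), since only finitely many gaps separating the $F$-cluster from the rest of the spectrum need to be controlled near a given point. For the analyticity of $\Lambda_{F,s}$ I would invoke the abstract result of \cite{lala2004}: near $\phi_0\in\mathcal A_{F,\Omega}$ the spectral projection onto the eigenvalues with index in $F$ depends real-analytically on $\phi$, and $\Lambda_{F,s}[\phi]$ is a real-analytic function of the finitely many eigenvalues of the compression of $T_{i,\phi}$ to the (finite-dimensional) range of that projection, hence real-analytic in $\phi$. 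The same machinery, together with the stationarity built into the symmetric-function formalism at a coincidence point (so that eigenfunction derivatives drop out), gives at $\tilde\phi\in\Theta_{F,\Omega}$, after unwinding the relation $T_{i,\phi}=P_\phi^{(-1)}\circ J_{i,\phi}$ and using the Rayleigh quotient $\lambda=P_\phi[u][u]/J_{i,\phi}[u][u]$, the ``bulk'' formula
\begin{equation*}
d|_{\phi=\tilde\phi}(\Lambda_{F,s})[\psi]=\lambda_F^s[\tilde\phi]\binom{|F|-1}{s-1}\sum_{l\in F}\Bigl(\bigl(d|_{\phi=\tilde\phi}P_\phi[\psi]\bigr)[v_l][v_l]-\lambda_F[\tilde\phi]\bigl(d|_{\phi=\tilde\phi}J_{i,\phi}[\psi]\bigr)[v_l][v_l]\Bigr),
\end{equation*}
where $\{v_l\}_{l\in F}$ is $P$-orthonormal. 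Differentiating the (polynomial) dependence of $P_\phi$ and $J_{i,\phi}$ on $\nabla\phi$, $\det\nabla\phi$ (and the surface density for $i=2,3$), and transplanting everything to $\tilde\phi(\Omega)$ with velocity field $w=\psi\circ\tilde\phi^{(-1)}$, one obtains for each $v_l$ an integral over $\tilde\phi(\Omega)$ with integrand linear in $\nabla w$ (equivalently, after rearranging derivatives, in $w$), plus — when $i=2,3$ — a genuine surface term coming from the derivative of the tangential Jacobian, which is $\mathrm{div}_{\partial\tilde\phi(\Omega)}w$.

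The last step, which is where the real work lies and which is deferred to Section \ref{tech}, is to rewrite this expression as the boundary integral $\int_{\partial\tilde\phi(\Omega)}G(v_l)(\psi\circ\tilde\phi^{(-1)})\cdot\nu\,d\sigma$. Assuming $\tilde\phi(\Omega)$ of class $C^4$, elliptic regularity gives $v_l\in H^4(\tilde\phi(\Omega))$, so one may integrate by parts repeatedly, at each stage discarding the interior contributions by means of the strong equation $\Delta^2v_l-\tau\Delta v_l=\lambda_F v_l$ (respectively $=0$ for the Steklov problems \eqref{ksgen}, \eqref{bpgen}) and of the boundary conditions \eqref{dir}, \eqref{nav}, \eqref{neu}, \eqref{ksgen}, \eqref{bpgen}. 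In the Dirichlet case, where $v_l$ and $\partial_\nu v_l$ vanish on $\partial\tilde\phi(\Omega)$, all that survives is $-(\partial^2v_l/\partial\nu^2)^2$. In the Navier, Neumann and Steklov cases the eigenfunctions do not vanish on the boundary, and the bookkeeping produces, besides the quadratic densities $(1-\sigma)|D^2v_l|^2+\sigma(\Delta v_l)^2+\tau|\nabla v_l|^2$ (or its analogue with $\partial_\nu v_l$ for problems \eqref{nav}, \eqref{ksgen}), the cross terms with $\partial_\nu\Delta v_l$ and with $\mathrm{div}_{\partial\tilde\phi(\Omega)}(\nu^tD^2v_l)_{\partial\tilde\phi(\Omega)}$; the $-\lambda_F v_l^2$ term in the Neumann case comes from $-\lambda_F dJ_{1,\phi}$, while for the two Steklov problems the derivative of the surface measure supplies the $-\lambda_F K(\cdot)$ term ($K$ the mean curvature of $\partial\tilde\phi(\Omega)$) and the boundary conditions of \eqref{ksgen}, \eqref{bpgen}, which already contain $\lambda_F$, supply the remaining $-\lambda_F\,\partial_\nu(\cdot)$ term. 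Carrying this out case by case for the five boundary conditions yields the stated expressions for $G(v_l)$.

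The main obstacle is this last step: controlling all the boundary terms in the repeated integration by parts for the Neumann, Navier and Steklov problems, where the tangential (surface) divergence and the mean curvature enter, and checking that the numerous tangential contributions — which appear separately in $dP_\phi$ and in $dJ_{i,\phi}$ — cancel exactly, leaving only the normal component $(\psi\circ\tilde\phi^{(-1)})\cdot\nu$ of the velocity. This cancellation is forced by the Hadamard principle (the value $\Lambda_{F,s}[\phi(\Omega)]$ depends only on the set $\phi(\Omega)$), and provides a useful consistency check, but verifying it requires a careful tangential calculus on $\partial\tilde\phi(\Omega)$ combined with the PDE satisfied by the eigenfunctions. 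By contrast, the analyticity statements are essentially an adaptation of the now-standard framework of \cite{lala2004,lala2007}.
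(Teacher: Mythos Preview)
Your proposal is correct and follows essentially the same route as the paper: pull back to the fixed domain, invoke the Lamberti--Lanza de Cristoforis framework \cite{lala2004} for openness and analyticity, obtain the intermediate formula
\[
d|_{\phi=\tilde\phi}\Lambda_{F,s}[\psi]=\lambda_F^{s}[\tilde\phi]\binom{|F|-1}{s-1}\sum_{l\in F}\Bigl(d|_{\phi=\tilde\phi}P_{\phi}[\psi][u_l][u_l]-\lambda_F[\tilde\phi]\,d|_{\phi=\tilde\phi}J_{i,\phi}[\psi][u_l][u_l]\Bigr)
\]
(which is exactly what the paper derives from $T_{i,\phi}=P_\phi^{(-1)}\circ J_{i,\phi}$), and then reduce to the boundary via the integration-by-parts computations of Section~\ref{tech} (Lemmas~\ref{fondamentale} and~\ref{fondamentalej}). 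Your identification of the origin of each term in $G(v)$, the role of $H^4$-regularity, and the cancellation of tangential contributions matches the paper's argument.
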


\begin{proof}
For the proof of the first part of the theorem we refer to \cite[Theorem 3.1]{bula2013} (see also \cite{lala2004}). Concerning formula \eqref{derivdsys},
we start by recalling that, for $\phi\in\mathcal{A}_{\Omega}$, we have that the pull-back of the operator $M$ is defined by
$$
M[u][v]=\int_{\Omega}D^2(u\circ\phi^{-1}):D^2(v\circ\phi^{-1})|\det D\phi| dx,
$$
for any $u,v\in V(\Omega)$, and similarly also for $B,L,P,J_i$ and $T_i$. We have
\begin{equation*}
{\rm d|}_{\phi =\tilde\phi}\Lambda_{F,s}[\psi]=
-\lambda_{F}^{s+1}[\tilde \phi]   \binom{|F|-1}{s-1}
\sum_{l\in F}
P_{\tilde \phi}\Big[{\rm d|}_{\phi =\tilde \phi}T_{i,\phi }[\psi][u_l]\Big]\Big[ u_l\Big]
\end{equation*}
for all  $\psi\in C^2( \bar\Omega\, ;{\mathbb{R}}^N) $ (cf.\ \cite[proof of Theorem 3.38]{lala2004}), where $ u_l= v_l\circ \tilde \phi $ for all $l\in F$. Note also that by standard regularity theory (see e.g., \cite[Theorem~2.20]{ggs}) $v_l\in H^{4}(\tilde \phi (\Omega ))$ for all $l\in F$.

By standard calculus we have
\begin{equation*}
P_{\tilde \phi}\Big[{\rm d|}_{\phi =\tilde \phi}T_{i,\phi }[\psi][u_l]\Big]\Big[ u_l\Big]
=d|_{\phi=\tilde{\phi}}J_{i,\phi}[\psi][u_l][u_l]\\
 -\lambda_F^{-1}[\tilde\phi]d|_{\phi=\tilde{\phi}}P_{\phi}[\psi][u_l][u_l].
\end{equation*}
Applying Lemmas \ref{fondamentale} and \ref{fondamentalej}, we obtain formula \eqref{derivdsys}.
\end{proof}

We note that formula \eqref{derivdsys} for the Dirichlet problem was already obtained in \cite{orzua} using different techniques, but only for simple eigenvalues. We also remark that some of the specific cases of formula \eqref{derivdsys} were already obtained in \cite{bula2013, buosohinged,bulaproc,bupro,buproimse}. In particular, in \cite{buosohinged} it was derived a different formula for the Navier problem, which was later shown to be equivalent to the one proposed here (see \cite{buosotesi, bulaproc}).

	
	\section{Shape optimization and isovolumetric perturbations}
\label{critico}

Important shape optimization problems for the eigenvalues of elliptic operators were already addressed in \cite{rayleigh}, where the author claims that among all bounded domains in $\mathbb R^2$ of given area, the ball minimizes the first eigenvalue of the Dirichlet Laplacian and Bilaplacian, namely
\begin{equation}\label{fab}
\lambda_1(\Omega)\ge\lambda_1(\Omega^*),
\end{equation}
where $\Omega^*$ is a ball with $|\Omega|=|\Omega^*|$. He does not provide any proof of inequality \eqref{fab}, claiming it trivial for physical evidence. The actual proof of inequality \eqref{fab}, in the case of the Dirichlet Laplacian, is due to Faber \cite{faber} and Krahn \cite{krahn}, and was then followed by similar inequalities for the eigenvalues of the Laplace operator subject to other boundary conditions. We refer to \cite{henrot} for an extensive discussion on the topic.

On the other hand, in the case of the biharmonic operator inequality \eqref{fab} is instead still an open problem and is known as the Rayleigh conjecture. It has been proved only in low dimension, namely $N=2,3$, by Nadirashvili \cite{nadir} and Ashbaugh and Benguria \cite{ash} improving an argument due to Talenti \cite{talenti}. Unfortunately, such an argument does not seem to work in higher dimension. Regarding other boundary conditions, similar inequalities have been derived  for the Neumann problem \eqref{biha}, \eqref{neu} and for the Steklov problem \eqref{bp}, also in quantitative form (see \cite{buchapro,bupro, buproimse,chasman11,chasman15}), showing that in such cases the ball is actually the maximizer. The Steklov problem \eqref{ks} is instead the only one in which the ball has been shown not to be the optimizer for the first eigenvalue (cf.\ \cite{kuttler72}).

Here we consider the following shape optimization problems for the symmetric functions of the eigenvalues
\begin{equation}\label{min}
\min_{V[\phi ]={\rm const}}\Lambda_{F,s}[\phi ]\ \ \ {\rm or}\ \ \ \max_{   V[\phi ]={\rm const}}\Lambda_{F,s}[\phi],
\end{equation}
where $V$ is the real valued function defined on ${\mathcal{A}}_{\Omega }$ which takes $\phi \in {\mathcal{A}}_{\Omega }$ to $V[\phi ]=|\phi (\Omega )|$.
Note that if $\tilde \phi \in {\mathcal{A}}_{\Omega }$ is a minimizer or maximizer in (\ref{min}) then $\tilde \phi $ is a critical domain transformation for the map $\phi\mapsto  \Lambda_{F,s}[\phi ]$ subject to volume constraint, i.e.,
\begin{equation*}
{\rm Ker\ d}|_{\phi =\tilde \phi }V \subset {\rm Ker\ d}|_{\phi =\tilde \phi } \Lambda_{F,s}.
\end{equation*}

The following theorem provides a characterization of all critical domain transformations $\phi$ (see also \cite{bula2013, buosohinged, bulareis, bupro, lalacri}).

\begin{thm}
Let $\Omega $ be a bounded open set in ${\mathbb{R}}^N$ of class $C^1$,  and $F$ be a finite subset of ${\mathbb{N}}$.
Assume that $\tilde \phi\in \Theta_{F, \Omega }$ is such that $\tilde \phi (\Omega )$ is of class $C^{4}$ and that the eigenvalues $\lambda_j[\tilde \phi ]$ have the common value $\lambda_F[\tilde \phi ]$ for all $j\in F$.  Let $\{  v_l\}_{l\in F}$ be an orthornormal basis in $V(\tilde \phi (\Omega ))$ of the eigenspace corresponding to $\lambda_F[\tilde \phi ]$ (the orthonormality being taken with respect to \eqref{prodotto}). Then  $\tilde \phi$
is a critical domain transformation  for any of the functions $\Lambda_{F,s}$, $s=1,\dots , |F|$,  with  volume constraint
if and only if  there exists $c\in {\mathbb{R}}$ such that
	\begin{equation}
	\label{lacondizionedsys}
	\sum_{l=1}^{|F|}G(v_l)=c,\ {\rm\ on\ }\partial\tilde\phi(\Omega).
	\end{equation}
\end{thm}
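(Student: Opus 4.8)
The plan is to combine the Hadamard-type formula \eqref{derivdsys} of Theorem \ref{duesettesys} with the Lagrange Multiplier Theorem, in the spirit of \cite{lalacri}. First I would recall the classical expression for the shape derivative of the volume functional $V$ (see e.g.\ \cite{henrot}): for every $\psi\in C^2(\overline\Omega;\mathbb{R}^N)$,
$$
d|_{\phi=\tilde\phi}V[\psi]=\int_{\partial\tilde\phi(\Omega)}(\psi\circ\tilde\phi^{(-1)})\cdot\nu\,d\sigma.
$$
Choosing $\psi$ so that $\psi\circ\tilde\phi^{(-1)}$ equals the position field $x\mapsto x$ gives $d|_{\phi=\tilde\phi}V[\psi]=N|\tilde\phi(\Omega)|>0$, hence $d|_{\phi=\tilde\phi}V\neq 0$ as a continuous linear functional on $C^2(\overline\Omega;\mathbb{R}^N)$. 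By definition, $\tilde\phi$ is critical for $\Lambda_{F,s}$ under volume constraint precisely when $\mathrm{Ker}\,d|_{\phi=\tilde\phi}V\subseteq\mathrm{Ker}\,d|_{\phi=\tilde\phi}\Lambda_{F,s}$; since a linear functional vanishing on the kernel of a nonzero linear functional is a scalar multiple of the latter, this inclusion is equivalent to the existence of $c_s\in\mathbb{R}$ with $d|_{\phi=\tilde\phi}\Lambda_{F,s}=c_s\,d|_{\phi=\tilde\phi}V$ on $C^2(\overline\Omega;\mathbb{R}^N)$.

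Next I would substitute \eqref{derivdsys} and the volume formula into this identity. After the change of variable $\eta=\psi\circ\tilde\phi^{(-1)}$, which runs over all of $C^2(\overline{\tilde\phi(\Omega)};\mathbb{R}^N)$ since $\tilde\phi$ is a bi-$C^2$ diffeomorphism, the identity becomes
$$
\int_{\partial\tilde\phi(\Omega)}\left(\lambda_F^s[\tilde\phi]\binom{|F|-1}{s-1}\sum_{l=1}^{|F|}G(v_l)-c_s\right)\eta\cdot\nu\,d\sigma=0
$$
for all such $\eta$. To pass from this to a pointwise statement I would use that the normal traces $\eta\cdot\nu|_{\partial\tilde\phi(\Omega)}$ exhaust $C^\infty(\partial\tilde\phi(\Omega))$, which is dense in $L^1(\partial\tilde\phi(\Omega))$: given $g\in C^\infty(\partial\tilde\phi(\Omega))$, one extends $g$ to a neighborhood and multiplies by a fixed smooth extension of $\nu$ — possible because $\partial\tilde\phi(\Omega)$ is of class $C^4$ — and pulls back via $\tilde\phi$ to obtain an admissible field. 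Since $\sum_l G(v_l)\in L^1(\partial\tilde\phi(\Omega))$ by the same trace estimates for $v_l\in H^4(\tilde\phi(\Omega))$ already used in the proof of Theorem \ref{duesettesys}, the fundamental lemma of the calculus of variations on the $C^1$ manifold $\partial\tilde\phi(\Omega)$ yields $\lambda_F^s[\tilde\phi]\binom{|F|-1}{s-1}\sum_{l} G(v_l)=c_s$ on $\partial\tilde\phi(\Omega)$. As $\lambda_F[\tilde\phi]>0$ and $\binom{|F|-1}{s-1}\geq1$, this is the same as \eqref{lacondizionedsys} with $c=c_s\big/\big(\lambda_F^s[\tilde\phi]\binom{|F|-1}{s-1}\big)$; and the constant $c$ is in fact independent of $s$, because one may run the argument for a single value of $s$ (say $s=1$) and, conversely, if $\sum_{l}G(v_l)\equiv c$ then \eqref{derivdsys} gives at once $d|_{\phi=\tilde\phi}\Lambda_{F,s}=c\,\lambda_F^s[\tilde\phi]\binom{|F|-1}{s-1}\,d|_{\phi=\tilde\phi}V$ for every $s$, so $\tilde\phi$ is critical for all the $\Lambda_{F,s}$ simultaneously. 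This closes both implications.

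The argument has no serious obstacle once \eqref{derivdsys} is available; the only points requiring care are (i) verifying that the $C^2$-regularity of $\tilde\phi$ together with the $C^4$-regularity of $\tilde\phi(\Omega)$ suffices both to make the pull-back substitution a bijection of the relevant function spaces and to produce admissible test fields with arbitrary prescribed smooth normal trace, and (ii) checking that $\sum_{l}G(v_l)$ is integrable over $\partial\tilde\phi(\Omega)$, so that the variational lemma applies and the equality in \eqref{lacondizionedsys} is meaningful. Both are exactly the regularity already secured when establishing \eqref{derivdsys}, so no additional work is needed, and everything else is the standard Lagrange-multiplier reduction.
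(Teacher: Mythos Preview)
Your proposal is correct and follows exactly the approach indicated by the paper, whose proof consists of a single sentence: ``The proof is a straightforward application of Lagrange Multipliers Theorem combined with formula (\ref{derivdsys}).'' You have simply spelled out the details of that application---the volume differential, the nonvanishing of $dV$, the reduction to a scalar multiple, and the passage from the integral identity to the pointwise condition via density of normal traces---none of which the paper makes explicit.
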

\begin{proof}
The proof is a straightforward application of Lagrange Multipliers Theorem combined with formula (\ref{derivdsys}).
\end{proof}

	As we have said, balls play a relevant role in the shape optimization of the eigenvalues of the Laplace and biharmonic operators. Hence we need to analyze in more details the behavior of the eigenfunctions on balls. We have the following result (cf.\ \cite[Lemma 4.22]{bupro}).
		
	\begin{thm}
	\label{propedeutico}
		Let $B$ be a ball in $\mathbb{R}^N$ centered at zero, and let $\lambda$ be an eigenvalue of problem (\ref{plateweak}) in $B$. Let $F$ be the subset of
		$\mathbb{N}$ of all $j$ such that the $j$-th eigenvalue of problem (\ref{plateweak}) in $B$ coincides with $\lambda$. Let $v_1,\dots,v_{|F|}$
		be an orthonormal basis of the eigenspace associated with the eigenvalue $\lambda$, where the orthonormality is taken with respect to the scalar
		product in $V(B)$. Then
		$$
		\sum_{j=1}^{|F|}v_j^2,\ \sum_{j=1}^{|F|}|\nabla v_j|^2,\ \sum_{j=1}^{|F|}|\Delta v_j|^2,\ \sum_{j=1}^{|F|}|D^2v_j|^2
		$$ 
	are radial functions.
	\end{thm}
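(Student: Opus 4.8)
The plan is to exploit the rotational symmetry of the ball together with the uniqueness of the eigenspace as a geometric object, not as a collection of particular functions. The key observation is that the four quantities in question are each of the form $Q[v]$ for a quadratic expression $Q$ that is invariant under orthogonal transformations of $\mathbb{R}^N$, in the sense that $Q[v\circ A](x) = Q[v](Ax)$ for every $A\in O(N)$ (this holds for $v^2$, $|\nabla v|^2$, $(\Delta v)^2$ and $|D^2v|^2$ because $\nabla$, $\Delta$ and $D^2$ transform covariantly under rotations and the Frobenius norm is $O(N)$-invariant). Summing $Q[v_j]$ over an orthonormal basis produces an object that depends only on the eigenspace $E_\lambda$ and not on the choice of basis, since a change of orthonormal basis is an orthogonal (in fact unitary, but here real) transformation of $E_\lambda$, and $\sum_j Q[v_j]$ is invariant under such changes — this is the familiar fact that $\sum_j \langle Q(\cdot), v_j\rangle$–type expressions compute a trace and traces are basis-independent.

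First I would make precise that $E_\lambda$ is $O(N)$-invariant: since $B$ is a ball centered at the origin, for any $A\in O(N)$ the map $v\mapsto v\circ A$ preserves $\Omega=B$, preserves the bilinear forms $P$ and $J_i$ (each is built from rotationally covariant differential operators integrated against the rotation-invariant Lebesgue or surface measure), and preserves the space $V(B)$; hence it maps the eigenspace associated with $\lambda$ onto itself and acts there as an orthogonal transformation with respect to the scalar product \eqref{prodotto}. Second, I would fix $x\in B$ and $A\in O(N)$ and compute
$$
\sum_{j=1}^{|F|} Q[v_j](Ax) = \sum_{j=1}^{|F|} Q[v_j\circ A](x).
$$
Since $\{v_j\circ A\}_{j=1}^{|F|}$ is again an orthonormal basis of $E_\lambda$ (by the previous step), and since $\sum_j Q[w_j](x)$ is independent of the choice of orthonormal basis $\{w_j\}$ of $E_\lambda$ (the basis-independence being the crucial algebraic lemma), the right-hand side equals $\sum_{j=1}^{|F|} Q[v_j](x)$. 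Therefore the function $x\mapsto \sum_j Q[v_j](x)$ is constant on $O(N)$-orbits, i.e.\ radial.

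The main obstacle — really the only nontrivial point — is the basis-independence of $\sum_j Q[w_j](x)$, since $Q$ is quadratic but not obviously a ``trace form.'' I would handle this by writing, for each of the four cases, $Q[w] = \sum_\alpha (L_\alpha w)(x)^2$ for an appropriate finite family of linear operators $L_\alpha$ (namely $L = \mathrm{id}$; $L = \partial_i$; $L=\Delta$; $L = \partial_i\partial_k$), so that
$$
\sum_{j=1}^{|F|} Q[w_j](x) = \sum_\alpha \sum_{j=1}^{|F|} \big((L_\alpha w_j)(x)\big)^2
= \sum_\alpha \big\| \mathrm{ev}_{\alpha,x}\big|_{E_\lambda} \big\|^2,
$$
where $\mathrm{ev}_{\alpha,x}$ is the linear functional $w\mapsto (L_\alpha w)(x)$ on the finite-dimensional inner product space $E_\lambda$; the squared norm of (the Riesz representative of) a linear functional restricted to $E_\lambda$ is manifestly independent of the orthonormal basis chosen to expand it. This makes the argument rigorous and uniform across all five boundary-value problems. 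One should note that the regularity needed for pointwise evaluation of $\Delta v_j$ and $D^2 v_j$ on $\overline B$ is available from the standard elliptic regularity already invoked in the proof of Theorem \ref{duesettesys} (the eigenfunctions lie in $H^4(B)$, hence in $C^2(\overline B)$ for the dimensions considered, or one argues on the open ball which suffices for radiality). The conclusion of Theorem \ref{propedeutico} then follows immediately.
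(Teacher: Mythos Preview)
Your argument is correct and follows essentially the same route as the paper: both proofs use that $\{v_j\circ R\}$ is again a $P$-orthonormal basis of the eigenspace (by rotation invariance of $P$, $J_i$, and $V(B)$), and then check that each of the four sums is unchanged under passage to another orthonormal basis. The only difference is packaging: the paper carries out the basis-change computation explicitly in each of the four cases (writing $v_j=\sum_l A_{jl}[R]\,v_l\circ R$ and expanding), whereas you abstract the common mechanism by writing $Q[w]=\sum_\alpha (L_\alpha w)(x)^2$ and invoking the basis-independence of $\sum_j \ell(w_j)^2$ for a linear functional $\ell$ on a finite-dimensional inner product space. One small remark: your regularity aside (``$H^4\hookrightarrow C^2$ for the dimensions considered'') is dimension-dependent and not quite what you want; the paper simply invokes standard elliptic regularity on the smooth ball to get $v_j\in C^\infty(\overline B)$, which is the clean way to justify the pointwise evaluations.
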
	
	\begin{proof}
	First of all, note that by standard regularity theory (see e.g., \cite{adn,ggs}), the functions $v_{j}\in C^{\infty}(\overline B)$ for all $j\in F$.
	
		Let $O_N(\mathbb{R})$ denote the group of orthogonal linear transformations in $\mathbb{R}^N$. Since the operators $P$ and $J_i$, $i=1,2,3$ are invariant under rotations, then
		$v_k\circ R$, where $R\in O_N(\mathbb R)$, is still an eigenfunction with eigenvalue $\lambda$; moreover, $\{v_j\circ R:j=1,\dots, |F|\}$ is another orthonormal basis
		for the eigenspace associate with $\lambda$. Since both $\{v_j:j=1,\dots, |F|\}$ and $\{v_j\circ R:j=1,\dots, |F|\}$
		are orthonormal bases, then there exists $A[R]\in O_N(\mathbb{R})$ with matrix $(A_{ij}[R])_{i,j=1,\dots,|F|}$ such that
		\begin{equation}
		\label{eq}
		v_j=\sum_{l=1}^{|F|}A_{jl}[R]v_l\circ R.
		\end{equation}
		This implies that
		\begin{equation*}
		\sum_{j=1}^{|F|}v_j^2=\sum_{j=1}^{|F|}(v_j\circ R)^2,
		\end{equation*}
		from which we get that $\sum_{j=1}^{|F|}v_j^2$ is radial.
		Moreover, using standard calculus and (\ref{eq}), we get
		\begin{equation*}
		\sum_{j=1}^{|F|}|\nabla v_j|^2=\sum_{j,l_1,l_2=1}^{|F|}A_{jl_1}[R]A_{jl_2}[R]\left(\nabla v_{l_1}\circ R\right)\cdot\left(\nabla v_{l_2}\circ R\right)
		=\sum_{l=1}^{|F|}|\nabla v_l\circ R|^2.
		\end{equation*}
		Similarly,
		\begin{equation*}
		\sum_{j=1}^{|F|}|\Delta v_j|^2=\sum_{j=1}^{|F|}|\Delta v_j\circ R|^2.
		\end{equation*}
		On the other hand,
		\begin{multline*}
		D^2v_j\cdot D^2v_j
		=\sum_{l_1,l_2=1}^{|F|}A_{jl_1}[R]A_{jl_2}[R]R^t\cdot (D^2v_{l_1}\circ R)\cdot R\cdot R^t\cdot (D^2v_{l_2}\circ R)\cdot R\\
		=\sum_{l_1,l_2=1}^{|F|}A_{jl_1}[R]A_{jl_2}[R]R^t\cdot (D^2v_{l_1}\circ R)\cdot (D^2v_{l_2}\circ R)\cdot R,
		\end{multline*}
		therefore
		\begin{equation*}
		|D^2v_j|^2=\mathrm{tr}(D^2v_j\cdot D^2v_j)=\sum_{l_1,l_2=1}^{|F|}A_{jl_1}[R]A_{jl_2}[R](D^2v_{l_1}\circ R):(D^2v_{l_2}\circ R),
		\end{equation*}
		from which we get
		\begin{equation*}
		\sum_{j=1}^{|F|}|D^2v_j|^2=\sum_{j=1}^{|F|}|D^2v_j\circ R|^2.
		\end{equation*}
	\end{proof}

For rotation invariant operators such as the Laplace or the biharmonic operator, it is easy to show that any eigenfunction associated with a simple eigenvalue is radial. Theorem \ref{propedeutico} tells us that, when dealing with a multiple eigenvalue, we cannot consider the eigenfunctions alone, but we have to consider the whole eigenspace. In particular, this is very useful when coupled with condition \eqref{lacondizionedsys}.

	\begin{thm}
	\label{lepallesys}
	Let $\Omega$ be a domain in $\mathbb{R}^N$ of class $C^1$. Let $\tilde{\phi}\in\mathcal{A}_{\Omega}$ be such that
	$\tilde{\phi}(\Omega)$ is a ball. Let $\tilde{\lambda}$ be an eigenvalue of problem (\ref{plateweak}) in $\tilde{\phi}(\Omega)$,
	and let $F$ be the set of $j\in\mathbb{N}$ such that $\lambda_j[\tilde{\phi}]=\tilde{\lambda}$.
	Then $\tilde{\phi}$ is a critical point $\Lambda_{F,s}$  under volume constraint,
	for all $s=1,\dots,|F|$.
	\end{thm}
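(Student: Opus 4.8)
The plan is to deduce the statement from the criticality characterization \eqref{lacondizionedsys}, exploiting the rotational symmetry of the ball. First I would note that, since $F$ is taken to be precisely the set of indices $j$ with $\lambda_j[\tilde\phi]=\tilde\lambda$, one has $\tilde\phi\in\Theta_{F,\Omega}$: all the eigenvalues indexed by $F$ agree, and no eigenvalue with index outside $F$ equals $\tilde\lambda$; moreover $\tilde\phi(\Omega)$, being a ball, is of class $C^\infty$, so the hypotheses of the characterization theorem above (and of Theorem \ref{duesettesys}, on which it rests) are met. We may assume that the ball $B=\tilde\phi(\Omega)$ is centered at the origin. Fixing a $P$-orthonormal basis $\{v_l\}_{l\in F}$ of the eigenspace associated with $\tilde\lambda$, condition \eqref{lacondizionedsys} tells us that $\tilde\phi$ is critical for every $\Lambda_{F,s}$ precisely when $\sum_{l\in F}G(v_l)$ is constant on $\partial B$. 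Since $\partial B$ is a sphere centered at the origin, it therefore suffices to prove that $\sum_{l\in F}G(v_l)$ is a \emph{radial} function (on $\overline B$, or on a neighbourhood of $\partial B$, according to which derivatives occur in $G$).

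The crucial observation is that, in each of the five cases, $v\mapsto G(v)$ is a homogeneous quadratic expression built solely out of rotation-equivariant pointwise operations: $\nabla$, $D^2$, $\Delta$, contraction with the unit normal $\nu(x)=x/|x|$, the tangential projection and the tangential divergence $\mathrm{div}_{\partial B}$ on the sphere, the (constant) mean curvature $K$, and normal differentiation. From this I would derive the equivariance $G(w\circ R)=G(w)\circ R$ on $\partial B$ for every $R\in O_N(\mathbb{R})$ and every admissible $w$. For the terms $|D^2 w|^2$, $(\Delta w)^2$, $|\nabla w|^2$, $w^2$ this is exactly the computation used in the proof of Theorem \ref{propedeutico}; for the terms involving $\left(\frac{\partial^2 w}{\partial\nu^2}\right)^2$, $\left(\frac{\partial w}{\partial\nu}\right)^2$ and $\frac{\partial w}{\partial\nu}\frac{\partial\Delta w}{\partial\nu}$ it follows from $D^2(w\circ R)=R^t(D^2w\circ R)R$ together with $R\nu(x)=\nu(Rx)$ on $\partial B$. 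For the terms of $G$ containing a normal derivative of a squared quantity, namely $\lambda_F\frac{\partial}{\partial\nu}\left(\frac{\partial v}{\partial\nu}\right)^2$ in \eqref{ksgen} and $\lambda_F\frac{\partial v^2}{\partial\nu}$ in \eqref{bpgen}, I would first extend the squared quantity to a tubular neighbourhood of $\partial B$ using $\nu(x)=x/|x|$ and check equivariance there.

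With the equivariance available, I would then repeat the argument of the proof of Theorem \ref{propedeutico}. Writing $G(w)=\mathcal{Q}(w,w)$, where $\mathcal{Q}$ denotes the associated pointwise symmetric bilinear form, and using that $\{v_l\circ R\}_{l\in F}$ is again a $P$-orthonormal basis of the same eigenspace, so that $v_l\circ R=\sum_{j\in F}A_{jl}[R]v_j$ for some $A[R]\in O_{|F|}(\mathbb{R})$ (cf.\ the proof of Theorem \ref{propedeutico}), one obtains, for $x\in\partial B$,
\begin{equation*}
\sum_{l\in F}G(v_l)(Rx)=\sum_{l\in F}G(v_l\circ R)(x)=\sum_{j,k\in F}\left(\sum_{l\in F}A_{jl}[R]A_{kl}[R]\right)\mathcal{Q}(v_j,v_k)(x)=\sum_{j\in F}G(v_j)(x),
\end{equation*}
so $\sum_{l\in F}G(v_l)$ is invariant under $O_N(\mathbb{R})$, hence radial, hence constant on $\partial B$. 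By \eqref{lacondizionedsys}, this shows that $\tilde\phi$ is a critical point of every $\Lambda_{F,s}$ under volume constraint.

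I expect the only genuine work to lie in the second paragraph: verifying the rotational equivariance of the mixed boundary term $\frac{\partial v}{\partial\nu}\mathrm{div}_{\partial B}(\nu\cdot D^2 v)_{\partial B}$ occurring in the Navier and \eqref{ksgen} cases — which ultimately holds because $\mathrm{div}_{\partial B}$ and the tangential projection are intrinsic to the sphere and hence commute with the isometries $R|_{\partial B}$ — and correctly interpreting the normal-derivative terms in the two Steklov problems. As a shortcut, the Dirichlet case can be settled without the equivariance discussion: $v_l=\frac{\partial v_l}{\partial\nu}=0$ on $\partial B$ forces $|D^2 v_l|^2=\left(\frac{\partial^2 v_l}{\partial\nu^2}\right)^2$ there, and the claim follows directly from Theorem \ref{propedeutico}; similarly, for the Steklov problem \eqref{ksgen} one has $\left(\frac{\partial v_l}{\partial\nu}\right)^2=|\nabla v_l|^2$ on $\partial B$.
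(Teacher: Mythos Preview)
Your argument is correct and rests on the same mechanism as the paper's proof --- rotational invariance of the problem combined with the orthogonal change of basis $v_j\mapsto\sum_l A_{jl}[R]\,v_l\circ R$ from Theorem~\ref{propedeutico} --- but the packaging differs. The paper does not verify equivariance of $G$ as a single object; instead it invokes Theorem~\ref{propedeutico} directly for the ``elementary'' pieces $\sum v_j^2,\ \sum|\nabla v_j|^2,\ \sum(\Delta v_j)^2,\ \sum|D^2v_j|^2$ (which disposes of the Dirichlet, Neumann and \eqref{bpgen} cases, using that $K$ is constant and that normal derivatives of radial functions are radial), and then, for the mixed term $\sum_j\frac{\partial v_j}{\partial\nu}\bigl(\frac{\partial\Delta v_j}{\partial\nu}+(1-\sigma)\,\mathrm{div}_{\partial B}(\nu\cdot D^2v_j)_{\partial B}\bigr)$ arising in the Navier and \eqref{ksgen} cases, it exploits $v_j|_{\partial B}=0$ to rewrite $\mathrm{div}_{\partial B}(\nu\cdot D^2v_j)_{\partial B}=\Delta_{\partial B}\bigl(\tfrac{\partial v_j}{\partial\nu}\bigr)$ and then expresses both halves via explicit algebraic identities (e.g.\ $\sum_j\nabla v_j\cdot\nabla\Delta v_j=\tfrac18\Delta^2\sum_j v_j^2-\tfrac{\lambda}{4}\sum_j v_j^2-\tfrac14\sum_j(\Delta v_j)^2-\tfrac12\sum_j|D^2v_j|^2$) in terms of the radial quantities already controlled. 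Your route --- polarize $G$ to a bilinear $\mathcal Q$, check $\mathcal Q(v\circ R,w\circ R)=\mathcal Q(v,w)\circ R$ once, and conclude --- is more uniform and avoids those identities; the paper's route is more hands-on and yields, as a by-product, explicit radial-function expressions for each piece.
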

	
	\begin{proof}
	Thanks to Theorem \ref{propedeutico}, it remains to prove that, for problems \eqref{biha}, \eqref{nav} and \eqref{ksgen}, the function
	$$
	\sum_{j=1}^{|F|}\frac{\partial v_j}{\partial\nu}\left(\frac{\partial\Delta v_j}{\partial\nu}+(1-\sigma){\rm div}_{\partial\tilde{\phi}(\Omega)}(\nu\cdot D^2v_j)_{\partial\tilde{\phi}(\Omega)}\right)
	$$
	is a radial function. In particular, here $V(\tilde{\phi}(\Omega))=H^2(\tilde{\phi}(\Omega))\cap H^1_0(\tilde{\phi}(\Omega))$, hence
	$$
	\frac{\partial\ }{\partial\nu}\nabla v_j=\nabla \frac{\partial v_j}{\partial\nu},
	$$
	from which we get
	$$
	{\rm div}_{\partial\tilde{\phi}(\Omega)}(\nu\cdot D^2v_j)_{\partial\tilde{\phi}(\Omega)}=\Delta_{\partial\tilde{\phi}(\Omega)}\left(\frac{\partial v_j}{\partial\nu}\right)
	$$
	on $\partial\tilde{\phi}(\Omega)$. Therefore
	\begin{multline*}
	\sum_{j=1}^{|F|}\frac{\partial v_j}{\partial\nu}{\rm div}_{\partial\tilde{\phi}(\Omega)}(\nu\cdot D^2v_j)_{\partial\tilde{\phi}(\Omega)}
	=\sum_{j=1}^{|F|}\frac{\partial v_j}{\partial\nu}\Delta_{\partial\tilde{\phi}(\Omega)}\left(\frac{\partial v_j}{\partial\nu}\right)\\
	=\frac 1 2 \Delta_{\partial\tilde{\phi}(\Omega)}\left(\sum_{j=1}^{|F|}\left(\frac{\partial v_j}{\partial\nu}\right)^2\right)
	-\sum_{j=1}^{|F|}\left|\nabla_{\partial\tilde{\phi}(\Omega)} \frac{\partial v_j}{\partial\nu}\right|^2,
	\end{multline*}
	where the two summands on the right-hand side can be shown to be constant on $\partial\tilde{\phi}(\Omega)$ following the lines of the proof of Theorem \ref{propedeutico}.
	
	On the other hand,
	\begin{equation*}
	\sum_{j=1}^{|F|}\nabla v_j\cdot\nabla\Delta v_j=\frac 1 8 \Delta^2\left(\sum_{j=1}^{|F|}v_j^2\right)-\frac{\lambda}4\sum_{j=1}^{|F|}v_j^2
	-\frac 1 4 \sum_{j=1}^{|F|}(\Delta v_j)^2-\frac 1 2  \sum_{j=1}^{|F|}|D^2v_j|^2,
	\end{equation*}
	from which we deduce that $\sum_{j=1}^{|F|}\frac{\partial v_j}{\partial\nu}\frac{\partial\Delta v_j}{\partial\nu}$ is constant on $\partial\tilde{\phi}(\Omega)$.
	\end{proof}

In general, balls are expected to be the extremizer of problems of the type \eqref{min} only when the first eigenvalue is involved (see e.g., \cite{henrot}), and even in this case problem \eqref{ks} shows up as a counterexample. Nevertheless, thanks to Theorem \ref{lepallesys}, we know that balls still are critical domains for all the eigenvalues. It also would be interesting to characterize the family of open sets $\tilde{\phi}(\Omega)$ such that condition \eqref{lacondizionedsys} is satisfied. The only result in this direction is due to Dalmasso \cite{dalmasso}, who proved that the ball is the only domain satisfying condition \eqref{lacondizionedsys} for the first eigenvalue of the biharmonic operator subject to Dirichlet boundary conditions under the additional hypotesis that the first eigenfunction does not change sign.


\section{Some technical lemmas}
\label{tech}

In this section we prove two lemmas that has been used in the proof of Theorem \ref{duesettesys}.
	
	\begin{lemm}
	\label{fondamentale}
	Let $\Omega$ be a bounded domain in $\mathbb{R}^N$ of class $C^1$, and let $\tilde\phi\in\mathcal{A}^2_{\Omega}$ be such that $\tilde\phi(\Omega)$ is of class $C^2$.
	Let $u_1,u_2\in H^2(\Omega)$ be such that $v_1=u_1\circ\tilde{\phi}^{-1}$, $v_2=u_2\circ\tilde{\phi}^{-1}\in H^{4}(\tilde{\phi}(\Omega))$. Then
	\begin{multline}
	\label{formulafondamentale}
	d|_{\phi=\tilde{\phi}}M_{\phi}[\psi][u_1][u_2]=
	\int_{\partial\tilde{\phi}(\Omega)}(D^2v_1:D^2v_2)\zeta\cdot\nu d\sigma\\
+\int_{\partial\tilde{\phi}(\Omega)}\left(\mathrm{div}_{\partial\tilde{\phi}(\Omega)}(\nu\cdot D^2v_1)_{\partial\tilde{\phi}(\Omega)}\nabla v_2+\mathrm{div}_{\partial\tilde{\phi}(\Omega)}(\nu\cdot D^2v_2)_{\partial\tilde{\phi}(\Omega)}\nabla v_1\right)\cdot\zeta d\sigma\\
+\int_{\partial\tilde{\phi}(\Omega)}\left(\frac{\partial\Delta v_1}{\partial\nu}\nabla v_2+\frac{\partial\Delta v_2}{\partial\nu}\nabla v_1\right)\cdot\zeta d\sigma
-\int_{\tilde{\phi}(\Omega)}\left(\Delta^2v_1\nabla v_2+\Delta^2v_2\nabla v_1\right)\cdot\zeta d\sigma\\
-\int_{\partial\tilde{\phi}(\Omega)}\left(\frac{\partial^2v_1}{\partial\nu^2}\nabla v_2+\frac{\partial^2v_2}{\partial\nu^2}\nabla v_1\right)\cdot\frac{\partial\zeta}{\partial\nu}d\sigma\\
-\int_{\partial\tilde{\phi}(\Omega)}\left(\frac{\partial^2v_1}{\partial\nu^2}\frac{\partial\ }{\partial\nu}\nabla v_2+\frac{\partial^2v_2}{\partial\nu^2}\frac{\partial\ }{\partial\nu}\nabla v_1\right)\cdot\zeta d\sigma,
	\end{multline}
	\begin{multline}
	\label{formulafondamentale2}
	d|_{\phi=\tilde{\phi}}B_{\phi}[\psi][u_1][u_2]=
	\int_{\partial\tilde{\phi}(\Omega)}\Delta v_1\Delta v_2\zeta\cdot\nu d\sigma\\
+\int_{\partial\tilde{\phi}(\Omega)}\left(\frac{\partial\Delta v_1}{\partial\nu}\nabla v_2+\frac{\partial\Delta v_2}{\partial\nu}\nabla v_1\right)\cdot\zeta d\sigma
-\int_{\tilde{\phi}(\Omega)}\left(\Delta^2v_1\nabla v_2+\Delta^2v_2\nabla v_1\right)\cdot\zeta d\sigma\\
-\int_{\partial\tilde{\phi}(\Omega)}\left(\Delta v_1\nabla v_2+\Delta v_2\nabla v_1\right)\cdot\frac{\partial\zeta}{\partial\nu}d\sigma\\
-\int_{\partial\tilde{\phi}(\Omega)}\left(\Delta v_1\frac{\partial\ }{\partial\nu}\nabla v_2+\Delta v_2\frac{\partial\ }{\partial\nu}\nabla v_1\right)\cdot\zeta d\sigma,
	\end{multline}
and
	\begin{multline}
	\label{formulafondamentale3}
	d|_{\phi=\tilde{\phi}}L_{\phi}[\psi][u_1][u_2]=
	-\int_{\partial\tilde{\phi}(\Omega)}\nabla v_1\cdot\nabla v_2\zeta\cdot\nu d\sigma\\
+\int_{\partial\tilde{\phi}(\Omega)}\left(\frac{\partial v_1}{\partial\nu}\nabla v_2+\frac{\partial v_2}{\partial\nu}\nabla v_1\right)\cdot\zeta d\sigma
-\int_{\tilde{\phi}(\Omega)}\left(\Delta v_1\nabla v_2+\Delta v_2\nabla v_1\right)\cdot\zeta d\sigma,
	\end{multline}
	for all $\psi\in C^2(\overline{\Omega};\mathbb{R}^N)$, where $\zeta=\psi\circ\tilde{\phi}^{-1}$. 
	\end{lemm}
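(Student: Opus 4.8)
The plan is as follows. Fix $u_1,u_2$. Since the admissible class is open in $C^2(\overline\Omega;\mathbb{R}^N)$, the segment $\phi_\varepsilon=\tilde\phi+\varepsilon\psi$ stays admissible for $|\varepsilon|$ small, and — the maps $\phi\mapsto M_\phi[u_1][u_2]$, $\phi\mapsto B_\phi[u_1][u_2]$, $\phi\mapsto L_\phi[u_1][u_2]$ being real-analytic on the admissible class by the arguments of \cite{bula2013,lala2004} (the same ones underlying the first part of Theorem \ref{duesettesys}) — the Fr\'echet differential at $\tilde\phi$ in the direction $\psi$ coincides with $\frac{d}{d\varepsilon}\big|_{\varepsilon=0}$ along this segment. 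Composing with $\tilde\phi^{-1}$, set $\Phi_\varepsilon=\phi_\varepsilon\circ\tilde\phi^{-1}$: this is a family of bi-Lipschitz maps of $\tilde\phi(\Omega)$ with $\Phi_0=\mathrm{id}$ and velocity $\zeta=\psi\circ\tilde\phi^{-1}$, and since $v_i=u_i\circ\tilde\phi^{-1}$ we have $u_i\circ\phi_\varepsilon^{-1}=v_i\circ\Phi_\varepsilon^{-1}=:w_i^\varepsilon$ on $\phi_\varepsilon(\Omega)=\Phi_\varepsilon(\tilde\phi(\Omega))$. By the change of variables formula $M_{\phi_\varepsilon}[u_1][u_2]=\int_{\Phi_\varepsilon(\tilde\phi(\Omega))}D^2w_1^\varepsilon:D^2w_2^\varepsilon\,dy$, and likewise $B_{\phi_\varepsilon}$, $L_{\phi_\varepsilon}$ carry the integrands $\Delta w_1^\varepsilon\,\Delta w_2^\varepsilon$, $\nabla w_1^\varepsilon\cdot\nabla w_2^\varepsilon$. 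Thus everything reduces to the shape derivatives at $\varepsilon=0$ of these three functionals of the moving domain $\Phi_\varepsilon(\tilde\phi(\Omega))$.

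Next I would compute the first variation via Hadamard's formula. Since $w_i^\varepsilon\circ\Phi_\varepsilon=v_i$ is independent of $\varepsilon$, the material derivative of $w_i^\varepsilon$ vanishes, so its Eulerian (shape) derivative is $w_i'=-\nabla v_i\cdot\zeta$; and since the Eulerian derivative commutes with spatial differentiation, $\partial_\varepsilon|_{\varepsilon=0}D^2w_i^\varepsilon=-D^2(\nabla v_i\cdot\zeta)$, and similarly for $\Delta$ and $\nabla$. Plugging this into
\[
\frac{d}{d\varepsilon}\Big|_{\varepsilon=0}\int_{\Phi_\varepsilon(\tilde\phi(\Omega))}f_\varepsilon\,dy=\int_{\tilde\phi(\Omega)}f_0'\,dy+\int_{\partial\tilde\phi(\Omega)}f_0\,\zeta\cdot\nu\,d\sigma
\]
gives, for instance,
\begin{multline*}
d|_{\phi=\tilde\phi}M_\phi[\psi][u_1][u_2]=-\int_{\tilde\phi(\Omega)}\big(D^2(\nabla v_1\cdot\zeta):D^2v_2+D^2v_1:D^2(\nabla v_2\cdot\zeta)\big)dy\\
+\int_{\partial\tilde\phi(\Omega)}(D^2v_1:D^2v_2)\,\zeta\cdot\nu\,d\sigma,
\end{multline*}
together with the analogous identities for $B$ and $L$ (with $D^2v_1:D^2v_2$ replaced by $\Delta v_1\Delta v_2$, resp.\ $\nabla v_1\cdot\nabla v_2$, throughout); the volume integrals make sense since $v_i\in H^4(\tilde\phi(\Omega))$. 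Equivalently, one may avoid moving domains and differentiate the pulled-back integrands directly on the fixed set $\Omega$, using $\frac{d}{d\varepsilon}|_{\varepsilon=0}(D\phi_\varepsilon)^{-1}=-(D\tilde\phi)^{-1}D\psi\,(D\tilde\phi)^{-1}$ and $\frac{d}{d\varepsilon}|_{\varepsilon=0}|\det D\phi_\varepsilon|=|\det D\tilde\phi|\,\mathrm{tr}\big((D\tilde\phi)^{-1}D\psi\big)$ and then changing variables back to $\tilde\phi(\Omega)$, as in \cite{bula2013,bupro}; this yields the same volume integrals.

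It then remains to integrate by parts the volume integrals so as to move every derivative off the factor $\nabla v_1\cdot\zeta$ (resp.\ $\nabla v_2\cdot\zeta$), after which one symmetrizes in $v_1\leftrightarrow v_2$. For $L$, one Green identity applied to $-\int_{\tilde\phi(\Omega)}\nabla(\nabla v_1\cdot\zeta)\cdot\nabla v_2$ gives $\int_{\tilde\phi(\Omega)}(\nabla v_1\cdot\zeta)\Delta v_2-\int_{\partial\tilde\phi(\Omega)}(\nabla v_1\cdot\zeta)\,\partial v_2/\partial\nu$, which after symmetrization is \eqref{formulafondamentale3}. For $B$, integrating $-\int_{\tilde\phi(\Omega)}\Delta(\nabla v_1\cdot\zeta)\,\Delta v_2$ by parts twice produces $-\int_{\tilde\phi(\Omega)}(\nabla v_1\cdot\zeta)\Delta^2v_2$, a boundary term with $\partial_\nu\Delta v_2$, and a boundary term $-\int_{\partial\tilde\phi(\Omega)}\Delta v_2\,\partial_\nu(\nabla v_1\cdot\zeta)\,d\sigma$. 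For $M$, integrating $-\int_{\tilde\phi(\Omega)}D^2(\nabla v_1\cdot\zeta):D^2v_2$ by parts twice yields $-\int_{\tilde\phi(\Omega)}(\nabla v_1\cdot\zeta)\Delta^2v_2$, a boundary term with $\partial_\nu\Delta v_2$, and a boundary term $-\int_{\partial\tilde\phi(\Omega)}\nabla(\nabla v_1\cdot\zeta)\cdot(D^2v_2\,\nu)\,d\sigma$. In the last two cases I would then expand $\nabla(\nabla v_1\cdot\zeta)=(D^2v_1)\zeta+(\nabla\zeta)^t\nabla v_1$ and split it on $\partial\tilde\phi(\Omega)$ into its normal and tangential parts, using $(D^2v_2)\nu=\frac{\partial}{\partial\nu}\nabla v_2$, $\nu\cdot(D^2v_2)\nu=\frac{\partial^2v_2}{\partial\nu^2}$ and $\nu\cdot(\nabla\zeta)^t\nabla v_1=\nabla v_1\cdot\frac{\partial\zeta}{\partial\nu}$; this isolates exactly the contributions carrying $\frac{\partial\zeta}{\partial\nu}$ and $\frac{\partial}{\partial\nu}\nabla v_2$ that occur in \eqref{formulafondamentale}, \eqref{formulafondamentale2}. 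The single remaining piece with a tangential derivative of $\zeta$, namely $-\int_{\partial\tilde\phi(\Omega)}\nabla_{\partial\tilde\phi(\Omega)}(\nabla v_1\cdot\zeta)\cdot(\nu\cdot D^2v_2)_{\partial\tilde\phi(\Omega)}\,d\sigma$, I would dispose of by tangential integration by parts on the closed hypersurface $\partial\tilde\phi(\Omega)$ (no boundary term), which turns it into $\int_{\partial\tilde\phi(\Omega)}(\nabla v_1\cdot\zeta)\,\mathrm{div}_{\partial\tilde\phi(\Omega)}(\nu\cdot D^2v_2)_{\partial\tilde\phi(\Omega)}\,d\sigma$; this produces the $\mathrm{div}_{\partial\tilde\phi(\Omega)}$ terms in \eqref{formulafondamentale}. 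Symmetrizing and collecting then gives \eqref{formulafondamentale}, \eqref{formulafondamentale2}, \eqref{formulafondamentale3}.

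Finally, a word on where the difficulty lies. The conceptual content is the one-line Hadamard formula of the second step; the real work, and the only delicate point, is the bookkeeping of the boundary integrals for $M$: correctly splitting $\nabla(\nabla v_1\cdot\zeta)$ into tangential and normal components along $\partial\tilde\phi(\Omega)$, extracting the tangential part $(\nu\cdot D^2v_2)_{\partial\tilde\phi(\Omega)}$ before applying the tangential divergence theorem, and checking that the standing regularity — $\tilde\phi(\Omega)$ of class $C^2$ (whence $\nu\in C^1(\partial\tilde\phi(\Omega))$), $\psi\in C^2(\overline\Omega;\mathbb{R}^N)$, and $v_i\in H^4(\tilde\phi(\Omega))$ (whence $D^2v_i$ and $\nabla\Delta v_i$ have square-integrable boundary traces and $\Delta^2v_i\in L^2(\tilde\phi(\Omega))$) — suffices to license every integration by parts and every boundary trace occurring in \eqref{formulafondamentale}--\eqref{formulafondamentale3}. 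All the rest is routine calculus.
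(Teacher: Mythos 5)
Your route is genuinely different from the paper's. The paper never passes to a moving domain: for \eqref{formulafondamentale2} it differentiates the pulled-back form on the fixed set $\Omega$ (derivative of the pulled-back Laplacians together with the Jacobian identity \eqref{der4}) and then reorganizes the result through a long chain of integrations by parts and tangential calculus involving the mean curvature, while for \eqref{formulafondamentale} and \eqref{formulafondamentale3} it simply refers to \cite[Lemma 4.4]{bupro} and \cite{buosotesi}. Your argument (reduction of the Fr\'echet differential to the directional derivative along $\tilde\phi+\varepsilon\psi$ via the known smooth dependence of $\phi\mapsto M_\phi[u_1][u_2]$, transport formula on the moving domain with vanishing material derivative, Eulerian derivative $-\nabla v_i\cdot\zeta$, then one Green-type identity per form, splitting $\nabla(\nabla v_1\cdot\zeta)$ into normal and tangential parts and disposing of the tangential piece by the tangential divergence theorem) is shorter, treats $M$, $B$, $L$ uniformly, and is licensed by the stated regularity ($v_i\in H^4(\tilde\phi(\Omega))$, $\partial\tilde\phi(\Omega)$ of class $C^2$, $\psi\in C^2$). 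Carried out as you describe, it reproduces \eqref{formulafondamentale} and \eqref{formulafondamentale2} exactly, including the pairings of $\partial^2 v_i/\partial\nu^2$ with $\frac{\partial}{\partial\nu}\nabla v_j$ and with $\frac{\partial\zeta}{\partial\nu}$.

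There is, however, a concrete problem at \eqref{formulafondamentale3}. Your Green identity yields
$\int_{\partial\tilde\phi(\Omega)}\nabla v_1\cdot\nabla v_2\,\zeta\cdot\nu\,d\sigma
-\int_{\partial\tilde\phi(\Omega)}\bigl(\frac{\partial v_1}{\partial\nu}\nabla v_2+\frac{\partial v_2}{\partial\nu}\nabla v_1\bigr)\cdot\zeta\,d\sigma
+\int_{\tilde\phi(\Omega)}\bigl(\Delta v_1\nabla v_2+\Delta v_2\nabla v_1\bigr)\cdot\zeta\,dy$,
which is the term-by-term \emph{negative} of the printed \eqref{formulafondamentale3}; asserting that it ``after symmetrization is \eqref{formulafondamentale3}'' is therefore false as written. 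In fact your version is the correct one: taking $\tilde\phi=\mathrm{id}$ and $\psi(x)=x$ one finds $L_{\phi_\varepsilon}[u_1][u_2]=(1+\varepsilon)^{N-2}L[u_1][u_2]$, so the derivative equals $(N-2)L[u_1][u_2]$, which agrees with your identity and not with the displayed one; moreover only with your sign do the volume terms in the proof of Theorem \ref{duesettesys} cancel through $\Delta^2v-\tau\Delta v=\lambda v$, and only then does $G$ contain $+\tau|\nabla v|^2$ as stated there (consistently with the classical Neumann Hadamard formula). So the displayed \eqref{formulafondamentale3} carries a global sign slip, and your write-up must address this explicitly: either record the corrected identity, or explain the discrepancy; as it stands, the claim that your (correct) computation coincides with the printed formula is an unexplained sign jump, and it is the only genuine gap in an otherwise sound and cleaner derivation.
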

	
	\begin{proof}
	First of all, we observe that the proof of \eqref{formulafondamentale} and of \eqref{formulafondamentale3} can be done following that of \cite[Lemma 4.4]{bupro} (we also refer to \cite[Lemmas 2.4 and 2.6]{buosotesi}). As for \eqref{formulafondamentale2}, we have (see also \cite[Lemma 2.5]{buosotesi})
	\begin{multline}
	\label{311}
		d|_{\phi=\tilde{\phi}} B_{\phi}[\psi][u_1][u_2]\\
			 =\int_{\Omega}(d|_{\phi=\tilde{\phi}}\Delta(u_1\circ\phi^{-1})\circ\phi)[\psi]
				(\Delta(u_2\circ\tilde{\phi}^{-1})\circ\tilde{\phi})|\det D\tilde{\phi}|dx\  \\
				+\int_{\Omega}(\Delta(u_1\circ\tilde{\phi}^{-1})\circ\tilde{\phi})
				(d|_{\phi=\tilde{\phi}}\Delta(u_2\circ\phi^{-1})\circ\phi)[\psi]|\det D\tilde{\phi}|dx\\
			 +\int_{\Omega}(\Delta(u_1\circ\tilde{\phi}^{-1})\circ\tilde{\phi})
				(\Delta(u_2\circ\tilde{\phi}^{-1})\circ\tilde{\phi})d|_{\phi=\tilde{\phi}}|\det D\phi|[\psi]dx,
				\end{multline}
		and we note that, by the equality
		\begin{equation}
\label{der4}
\left[
\left({\rm d}|_{\phi =\tilde \phi } \left({\mathrm{det}}\nabla\phi\right)[\psi]\right)\circ
\tilde \phi^{(-1)}\right]{\mathrm{det}}\nabla\tilde \phi^{(-1)}=
{\mathrm{div}}\left(\psi\circ\tilde  \phi^{(-1)} \right),
\end{equation}
		the last summand in (\ref{311}) equals
	$$\int_{\tilde{\phi}(\Omega)}\Delta v_1\Delta v_2\mathrm{div}\zeta dy.$$
	We have
	\begin{multline}
	\label{312}
	\int_{\tilde \phi(\Omega)}\frac{\partial^2v_1}{\partial y_r\partial y_s}\frac{\partial\zeta_r}{\partial y_s}\Delta v_2 dy
	=\int_{\partial\tilde \phi(\Omega)}\frac{\partial v_1}{\partial y_s}\frac{\partial\zeta_r}{\partial y_s}\nu_r\Delta v_2 d\sigma\\
	-\int_{\tilde \phi(\Omega)}\frac{\partial v_1}{\partial y_s}\frac{\partial{\rm div}\zeta}{\partial y_s}\Delta v_2 dy
	-\int_{\tilde \phi(\Omega)}\frac{\partial v_1}{\partial y_s}\frac{\partial\zeta_r}{\partial y_s}\frac{\partial\Delta v_2}{\partial y_r}dy\\
	=\int_{\partial\tilde \phi(\Omega)}\frac{\partial v_1}{\partial y_s}\frac{\partial\zeta_r}{\partial y_s}\nu_r\Delta v_2 d\sigma
	-\int_{\tilde \phi(\Omega)}\frac{\partial v_1}{\partial y_s}\frac{\partial\zeta_r}{\partial y_s}\frac{\partial\Delta v_2}{\partial y_r}dy\\
	-\int_{\partial\tilde \phi(\Omega)}\frac{\partial v_1}{\partial \nu}\Delta v_2 {\rm div}\zeta d\sigma
+\int_{\tilde \phi(\Omega)}\Delta v_1\Delta v_2 {\rm div}\zeta dy\\
+\int_{\tilde \phi(\Omega)}\nabla v_1\cdot\nabla\Delta v_2 {\rm div}\zeta dy,
	\end{multline}
	and
	\begin{multline}
	\label{313}
	\int_{\tilde \phi(\Omega)}\frac{\partial v_1}{\partial y_s}\Delta\zeta_s\Delta v_2 dy
	=\int_{\partial\tilde \phi(\Omega)}\frac{\partial v_1}{\partial y_s}\frac{\partial\zeta_s}{\partial\nu}\Delta v_2 d\sigma\\
	-\int_{\tilde \phi(\Omega)}\frac{\partial^2 v_1}{\partial y_i\partial y_s}\frac{\partial\zeta_s}{\partial y_i}\Delta v_2 dy
	-\int_{\tilde \phi(\Omega)}\frac{\partial v_1}{\partial y_s}\frac{\partial\zeta_s}{\partial y_i}\frac{\partial\Delta v_2 }{\partial y_i}dy\\
	=\int_{\partial\tilde \phi(\Omega)}\frac{\partial v_1}{\partial y_s}\frac{\partial\zeta_s}{\partial\nu}\Delta v_2 d\sigma
	-\int_{\tilde \phi(\Omega)}\frac{\partial v_1}{\partial y_s}\frac{\partial\zeta_s}{\partial y_i}\frac{\partial\Delta v_2 }{\partial y_i}dy\\
	-\int_{\partial\tilde \phi(\Omega)}\frac{\partial v_1}{\partial y_i}\frac{\partial\zeta_s}{\partial y_i}\nu_s\Delta v_2 d\sigma
	+\int_{\tilde \phi(\Omega)}\frac{\partial v_1}{\partial y_i}\frac{\partial{\rm div}\zeta}{\partial y_i}\Delta v_2 dy\\
	+\int_{\tilde \phi(\Omega)}\frac{\partial v_1}{\partial y_i}\frac{\partial\zeta_s}{\partial y_i}\frac{\partial\Delta v_2 }{\partial y_s}dy
	=\int_{\partial\tilde \phi(\Omega)}\frac{\partial v_1}{\partial y_s}\frac{\partial\zeta_s}{\partial\nu}\Delta v_2 d\sigma\\
	-\int_{\tilde \phi(\Omega)}\frac{\partial v_1}{\partial y_s}\frac{\partial\zeta_s}{\partial y_i}\frac{\partial\Delta v_2 }{\partial y_i}dy
	-\int_{\partial\tilde \phi(\Omega)}\frac{\partial v_1}{\partial y_i}\frac{\partial\zeta_s}{\partial y_i}\nu_s\Delta v_2 d\sigma\\
	+\int_{\tilde \phi(\Omega)}\frac{\partial v_1}{\partial y_i}\frac{\partial\zeta_s}{\partial y_i}\frac{\partial\Delta v_2 }{\partial y_s}dy
	+\int_{\partial\tilde \phi(\Omega)}\frac{\partial v_1}{\partial \nu}\Delta v_2{\rm div}\zeta d\sigma\\
	-\int_{\tilde \phi(\Omega)}\Delta v_1\Delta v_2{\rm div}\zeta dy
	-\int_{\tilde \phi(\Omega)}\nabla v_1\cdot\nabla\Delta v_2 {\rm div}\zeta dy.
	\end{multline}

Combining (\ref{311}), \eqref{312}, and (\ref{313}) we get
\begin{multline}
\label{314}
d|_{\phi=\tilde{\phi}} B_{\phi}[\psi][u_1][u_2]
=-\int_{\partial\tilde\phi(\Omega)}\left(\frac{\partial v_1}{\partial y_s}\Delta v_2+\frac{\partial v_2}{\partial y_s}\Delta v_1\right)\frac{\partial\zeta_r}{\partial y_s}\nu_r d\sigma\\
+\int_{\tilde\phi(\Omega)}\left(\frac{\partial v_1}{\partial y_s}\frac{\partial\Delta v_2}{\partial y_r}+\frac{\partial v_2}{\partial y_s}\frac{\partial\Delta v_1}{\partial y_r}\right)\frac{\partial\zeta_r}{\partial y_s}dy\\
+\int_{\partial\tilde\phi(\Omega)}\left(\frac{\partial v_1}{\partial\nu}\Delta v_2+\frac{\partial v_2}{\partial\nu}\Delta v_1\right){\rm div}\zeta d\sigma\\
-\int_{\tilde\phi(\Omega)}\Delta v_1\Delta v_2 {\rm div}\zeta dy
-\int_{\tilde\phi(\Omega)}\left(\nabla v_1\cdot\nabla\Delta v_2+\nabla v_2\cdot\nabla\Delta v_1\right){\rm div}\zeta dy\\
-\int_{\partial\tilde\phi(\Omega)}\left(\frac{\partial v_1}{\partial y_s}\Delta v_2+\frac{\partial v_2}{\partial y_s}\Delta v_1\right)\frac{\partial\zeta_s}{\partial\nu} d\sigma\\
+\int_{\tilde\phi(\Omega)}\left(\frac{\partial v_1}{\partial y_s}\frac{\partial\Delta v_2}{\partial y_i}+\frac{\partial v_2}{\partial y_s}\frac{\partial\Delta v_1}{\partial y_i}\right)\frac{\partial\zeta_s}{\partial y_i}dy.
\end{multline}
The last summand in the right-hand side of (\ref{314}) equals
\begin{multline*}
\int_{\partial\tilde\phi(\Omega)}\left(\frac{\partial\Delta v_1}{\partial \nu}\nabla v_2+\frac{\partial\Delta v_2}{\partial \nu}\nabla v_1\right)\cdot\zeta d\sigma
-\int_{\tilde\phi(\Omega)}\left(\Delta^2 v_1\nabla v_2+\Delta^2v_2\nabla v_1\right)\cdot\zeta dy\\
-\int_{\tilde\phi(\Omega)}\left(\frac{\partial^2 v_1}{\partial y_i\partial y_s}\frac{\partial\Delta v_2}{\partial y_i}+\frac{\partial^2 v_2}{\partial y_i\partial y_s}\frac{\partial\Delta v_1}{\partial y_i}\right)\zeta_sdy\\
=\int_{\partial\tilde\phi(\Omega)}\left(\frac{\partial\Delta v_1}{\partial \nu}\nabla v_2+\frac{\partial\Delta v_2}{\partial \nu}\nabla v_1\right)\cdot\zeta d\sigma
-\int_{\tilde\phi(\Omega)}\left(\Delta^2 v_1\nabla v_2+\Delta^2v_2\nabla v_1\right)\cdot\zeta dy\\
-\int_{\partial\tilde\phi(\Omega)}\left(\nabla v_1\cdot\nabla\Delta v_2+\nabla v_2\cdot\nabla\Delta v_1\right)\zeta\cdot\nu d\sigma\\
+\int_{\tilde\phi(\Omega)}\left(\nabla v_1\cdot\nabla\Delta v_2+\nabla v_2\cdot\nabla\Delta v_1\right){\rm div}\zeta dy\\
+\int_{\tilde\phi(\Omega)}\left(\frac{\partial v_1}{\partial y_i}\frac{\partial^2\Delta v_2}{\partial y_i\partial y_s}+\frac{\partial v_2}{\partial y_i}\frac{\partial^2\Delta v_1}{\partial y_i\partial y_s}\right)\zeta_sdy,
\end{multline*}
while the second one equals
\begin{multline*}
\int_{\partial\tilde\phi(\Omega)}\left(\frac{\partial v_1}{\partial\nu}\nabla\Delta v_2+\frac{\partial v_2}{\partial\nu}\nabla\Delta v_1\right)\cdot\zeta d\sigma\\
-\int_{\tilde\phi(\Omega)}\left(\frac{\partial v_1}{\partial y_s}\frac{\partial^2\Delta v_2}{\partial y_r\partial y_s}+\frac{\partial v_2}{\partial y_s}\frac{\partial^2\Delta v_1}{\partial y_r\partial y_s}\right)\zeta_rdy\\
-\int_{\partial\tilde\phi(\Omega)}\Delta v_1\Delta v_2\zeta\cdot\nu d\sigma
+\int_{\tilde\phi(\Omega)}\Delta v_1\Delta v_2{\rm div}\zeta dy.
\end{multline*}
Hence we have
\begin{multline}
\label{317}
d|_{\phi=\tilde{\phi}} B_{\phi}[\psi][u_1][u_2]
=-\int_{\partial\tilde\phi(\Omega)}\left(\frac{\partial v_1}{\partial y_s}\Delta v_2+\frac{\partial v_2}{\partial y_s}\Delta v_1\right)\frac{\partial\zeta_r}{\partial y_s}\nu_r d\sigma\\
+\int_{\partial\tilde\phi(\Omega)}\left(\frac{\partial v_1}{\partial\nu}\nabla\Delta v_2+\frac{\partial v_2}{\partial\nu}\nabla\Delta v_1\right)\cdot\zeta d\sigma\\
+\int_{\partial\tilde\phi(\Omega)}\left(\frac{\partial v_1}{\partial\nu}\Delta v_2+\frac{\partial v_2}{\partial\nu}\Delta v_1\right){\rm div}\zeta d\sigma\\
-\int_{\partial\tilde\phi(\Omega)}\Delta v_1\Delta v_2\zeta\cdot\nu d\sigma
-\int_{\partial\tilde\phi(\Omega)}\left(\frac{\partial v_1}{\partial y_s}\Delta v_2+\frac{\partial v_2}{\partial y_s}\Delta v_1\right)\frac{\partial\zeta_s}{\partial\nu} d\sigma\\
+\int_{\partial\tilde\phi(\Omega)}\left(\frac{\partial\Delta v_1}{\partial \nu}\nabla v_2+\frac{\partial\Delta v_2}{\partial \nu}\nabla v_1\right)\cdot\zeta d\sigma\\
-\int_{\tilde\phi(\Omega)}\left(\Delta^2 v_1\nabla v_2+\Delta^2v_2\nabla v_1\right)\cdot\zeta dy\\
-\int_{\partial\tilde\phi(\Omega)}\left(\nabla v_1\cdot\nabla\Delta v_2+\nabla v_2\cdot\nabla\Delta v_1\right)\zeta\cdot\nu d\sigma.
\end{multline}
The first summand in (\ref{317}) equals
\begin{multline*}
-\int_{\partial\tilde\phi(\Omega)}\left(\frac{\partial v_1}{\partial \nu}\Delta v_2+\frac{\partial v_2}{\partial \nu}\Delta v_1\right)\frac{\partial\zeta_r}{\partial \nu}\nu_r d\sigma\\
+\int_{\partial\tilde\phi(\Omega)}\left(\nabla_{\partial\tilde\phi(\Omega)}v_1\cdot\nabla_{\partial\tilde\phi(\Omega)}\Delta v_2+\nabla_{\partial\tilde\phi(\Omega)}v_2\cdot\nabla_{\partial\tilde\phi(\Omega)}\Delta v_1\right)\zeta\cdot\nu d\sigma\\
+\int_{\partial\tilde\phi(\Omega)}\left(\Delta v_1\Delta_{\partial\tilde\phi(\Omega)}v_2+\Delta v_2\Delta_{\partial\tilde\phi(\Omega)}v_1\right)\zeta\cdot\nu d\sigma\\
+\int_{\partial\tilde\phi(\Omega)}\left(\Delta v_1\nabla_{\partial\tilde\phi(\Omega)}v_2+\Delta v_1\nabla_{\partial\tilde\phi(\Omega)}v_1\right)\cdot(\nabla_{\partial\tilde\phi(\Omega)}\nu_r)\zeta_rd\sigma,
\end{multline*}
while the second one equals
\begin{multline*}
\int_{\partial\tilde\phi(\Omega)}\left(\frac{\partial v_1}{\partial\nu}\frac{\partial\Delta v_2}{\partial\nu}+\frac{\partial v_2}{\partial\nu}\frac{\partial\Delta v_1}{\partial\nu}\right)\zeta\cdot\nu d\sigma\\
+\int_{\partial\tilde\phi(\Omega)}K\left(\frac{\partial v_1}{\partial\nu}\Delta v_2+\frac{\partial v_2}{\partial\nu}\Delta v_1\right)\zeta\cdot\nu d\sigma\\
-\int_{\partial\tilde\phi(\Omega)}\left(\frac{\partial v_1}{\partial\nu}\Delta v_2+\frac{\partial v_2}{\partial\nu}\Delta v_1\right){\rm div}_{\partial\tilde\phi(\Omega)}\zeta d\sigma\\
-\int_{\partial\tilde\phi(\Omega)}\left(\Delta v_1\nabla_{\partial\tilde\phi(\Omega)}\frac{\partial v_2}{\partial\nu}+\Delta v_2\nabla_{\partial\tilde\phi(\Omega)}\frac{\partial v_1}{\partial\nu}\right)\cdot\zeta d\sigma,
\end{multline*}
where $K$ denotes the mean curvature of $\partial\tilde\phi(\Omega)$. Therefore the first three terms in the right-hand side of (\ref{317}) equal
\begin{multline}
\label{320}
\int_{\partial\tilde\phi(\Omega)}\left(\nabla v_1\cdot\nabla\Delta v_2+\nabla v_2\cdot\nabla\Delta v_1\right)\zeta\cdot\nu d\sigma\\
-2\int_{\partial\tilde\phi(\Omega)}\Delta v_1\Delta v_2\zeta\cdot\nu d\sigma
-\int_{\partial\tilde\phi(\Omega)}\left(\Delta v_1\frac{\partial^2v_2}{\partial\nu^2}+\Delta v_2\frac{\partial^2v_1}{\partial\nu^2}\right)\zeta\cdot\nu d\sigma\\
+\int_{\partial\tilde\phi(\Omega)}\left(\Delta v_1\nabla_{\partial\tilde\phi(\Omega)}v_2+\Delta v_1\nabla_{\partial\tilde\phi(\Omega)}v_1\right)\cdot(\nabla_{\partial\tilde\phi(\Omega)}\nu_r)\zeta_rd\sigma\\
-\int_{\partial\tilde\phi(\Omega)}\left(\Delta v_1\nabla_{\partial\tilde\phi(\Omega)}\frac{\partial v_2}{\partial\nu}+\Delta v_2\nabla_{\partial\tilde\phi(\Omega)}\frac{\partial v_1}{\partial\nu}\right)\cdot\zeta d\sigma.
\end{multline}
Now note that summing the third and the fifth terms in (\ref{320}) we get
\begin{multline}
\label{321}
-\int_{\partial\tilde\phi(\Omega)}\left(\Delta v_1\nabla\frac{\partial v_2}{\partial\nu}+\Delta v_2\nabla\frac{\partial v_1}{\partial\nu}\right)\cdot\zeta d\sigma\\
=-\int_{\partial\tilde\phi(\Omega)}\left(\Delta v_1\frac{\partial\ }{\partial\nu}\nabla v_2+\Delta v_2\frac{\partial\ }{\partial\nu}\nabla v_1\right)\cdot\zeta d\sigma\\
-\int_{\partial\tilde\phi(\Omega)}\left(\Delta v_1\nabla_{\partial\tilde\phi(\Omega)}v_2+\Delta v_2\nabla_{\partial\tilde\phi(\Omega)}v_1\right)\cdot(\nabla_{\partial\tilde\phi(\Omega)}\nu_r)\zeta_r d\sigma.
\end{multline}
Using (\ref{317}), (\ref{320}) and (\ref{321}), we finally get formula (\ref{formulafondamentale2}).
\end{proof}

\begin{lemm}
\label{fondamentalej}
Let $\Omega$ be a bounded domain in $\mathbb{R}^N$ of class $C^1$, and let $\tilde\phi\in\mathcal{A}^2_{\Omega}$ be such that $\tilde\phi(\Omega)$ is of class $C^2$.
Let $u_1,u_2\in H^2(\Omega)$ be such that $v_1=u_1\circ\tilde{\phi}^{-1}$, $v_2=u_2\circ\tilde{\phi}^{-1}\in H^{4}(\tilde{\phi}(\Omega))$. Then
\begin{equation}
	\label{jay1}
			d|_{\phi=\tilde{\phi}}J_{1,\phi}[\psi][u_1][u_2]
			=\int_{\tilde{\phi}(\Omega)}v_1v_2\mathrm{div}\zeta dy,
	\end{equation}
	
	\begin{multline}
	\label{jay2}
	d|_{\phi=\tilde{\phi}}J_{2,\phi}[\psi][u_1][u_2]
			=\int_{\partial\tilde{\phi}(\Omega)}\left(K\frac{\partial v_1}{\partial\nu}\frac{\partial v_2}{\partial\nu}+\frac{\partial\ }{\partial\nu}(\frac{\partial v_1}{\partial\nu}\frac{\partial v_2}{\partial\nu})\right)\zeta\cdot\nu d\sigma\\
	-\int_{\partial\tilde{\phi}(\Omega)}\nabla(\frac{\partial v_1}{\partial\nu}\frac{\partial v_2}{\partial\nu})\cdot\mu d\sigma
	-2\int_{\partial\tilde{\phi}(\Omega)}\nabla(\frac{\partial v_1}{\partial\nu}\frac{\partial v_2}{\partial\nu})\frac{\partial\zeta}{\partial\nu}\cdot\nu d \sigma,
	\end{multline}
	and
	\begin{equation}\label{jay3}
			d|_{\phi=\tilde{\phi}}J_{3,\phi}[\psi][u_1][u_2]
			=\int_{\partial\tilde{\phi}(\Omega)}\left(Kv_1v_2+\frac{\partial\ }{\partial\nu}(v_1v_2)\right)\zeta\cdot\nu d\sigma
			-\int_{\partial\tilde{\phi}(\Omega)}\nabla(v_1v_2)\cdot\mu d\sigma,
	\end{equation}
for all $\psi\in C^2(\overline{\Omega};\mathbb{R}^N)$, where $\zeta=\psi\circ\tilde{\phi}^{-1}$ and $K$ is the mean curvature on $\partial\tilde{\phi}(\Omega)$. 
\end{lemm}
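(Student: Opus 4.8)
The plan is to bring all three functionals back to the reference configuration $\Omega_0:=\tilde\phi(\Omega)$ and to regard the perturbation as a deformation of $\Omega_0$ by the vector field $\zeta:=\psi\circ\tilde\phi^{(-1)}$. Indeed, for $\phi_t:=\tilde\phi+t\psi$ one has $\phi_t=(\mathrm{Id}+t\zeta)\circ\tilde\phi$, so $\Omega_t:=\phi_t(\Omega)=(\mathrm{Id}+t\zeta)(\Omega_0)$, and the moving functions $v_{i,t}:=u_i\circ\phi_t^{(-1)}$ satisfy $v_{i,t}\circ\phi_t\equiv u_i$; hence their material derivative vanishes and their shape (Eulerian) derivative on $\Omega_0$ is $v_i'=-\nabla v_i\cdot\zeta$, with $\dot{(\nabla v_{i,t})}=\nabla v_i'$ since the Eulerian derivative commutes with spatial differentiation. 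Since $d|_{\phi=\tilde\phi}J_{i,\phi}[\psi]=\frac{d}{dt}\big|_{t=0}J_{i,\phi_t}$, the computation reduces to the Hadamard transport formulas
\begin{equation*}
\frac{d}{dt}\Big|_{t=0}\int_{\Omega_t}g_t\,dy=\int_{\Omega_0}\dot g\,dy+\int_{\partial\Omega_0}g_0\,(\zeta\cdot\nu)\,d\sigma,\qquad
\frac{d}{dt}\Big|_{t=0}\int_{\partial\Omega_t}g_t\,d\sigma=\int_{\partial\Omega_0}\Big(\dot g+\Big(\frac{\partial g_0}{\partial\nu}+K\,g_0\Big)(\zeta\cdot\nu)\Big)\,d\sigma,
\end{equation*}
where $K=\mathrm{div}_{\partial\Omega_0}\nu$ is the mean curvature, together with the tangential divergence theorem on the closed surface $\partial\Omega_0$ and the classical identity $\dot\nu=-\nabla_{\partial\Omega_0}(\zeta\cdot\nu)$ for the Eulerian derivative of the (distance-function extended) unit normal.

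Formula \eqref{jay1} follows at once by applying the volume transport formula to $g_t=v_{1,t}v_{2,t}$: since $\dot g=-\nabla(v_1v_2)\cdot\zeta$, integrating the bulk term by parts cancels the boundary term and leaves $\int_{\Omega_0}v_1v_2\,\mathrm{div}\,\zeta\,dy$ (alternatively, one differentiates $J_{1,\phi}[u_1][u_2]=\int_\Omega u_1u_2|\det D\phi|\,dx$ and invokes \eqref{der4}). For \eqref{jay3} one applies the boundary transport formula to the same $g_t$: writing $\zeta=\zeta_{\partial\Omega_0}+(\zeta\cdot\nu)\nu$, the normal part of $\dot g=-\nabla(v_1v_2)\cdot\zeta$ cancels the $\frac{\partial(v_1v_2)}{\partial\nu}(\zeta\cdot\nu)$ contribution of the curvature term, leaving $\int_{\partial\Omega_0}\big(K\,v_1v_2\,(\zeta\cdot\nu)-\nabla_{\partial\Omega_0}(v_1v_2)\cdot\zeta_{\partial\Omega_0}\big)\,d\sigma$, which rearranges into \eqref{jay3}. (Equivalently, write $J_{3,\phi}[u_1][u_2]=\int_{\partial\Omega}u_1u_2\,J_\partial(\phi)\,d\sigma$ with $J_\partial(\phi)$ the surface Jacobian of $\phi$ along $\partial\Omega$, whose differential at $\tilde\phi$ is $\big((\mathrm{div}_{\partial\Omega_0}\zeta)\circ\tilde\phi\big)\,J_\partial(\tilde\phi)$, so that $d|_{\phi=\tilde\phi}J_{3,\phi}[\psi][u_1][u_2]=\int_{\partial\Omega_0}v_1v_2\,\mathrm{div}_{\partial\Omega_0}\zeta\,d\sigma$, and conclude by the tangential divergence theorem.)

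The main work, and the step I expect to be genuinely laborious, is \eqref{jay2}, because the integrand $h_t=\frac{\partial v_{1,t}}{\partial\nu_t}\frac{\partial v_{2,t}}{\partial\nu_t}=(\nabla v_{1,t}\cdot\nu_t)(\nabla v_{2,t}\cdot\nu_t)$ depends on $t$ through both the functions and the moving normal $\nu_t$, so that its shape derivative is
\begin{equation*}
\dot h=\Big(\nabla v_1'\cdot\nu+\nabla v_1\cdot\dot\nu\Big)\frac{\partial v_2}{\partial\nu}+\frac{\partial v_1}{\partial\nu}\Big(\nabla v_2'\cdot\nu+\nabla v_2\cdot\dot\nu\Big).
\end{equation*}
Here one expands $\nabla v_i'=\nabla(-\nabla v_i\cdot\zeta)$ by the product rule — which yields Hessian terms of $v_i$ together with a contraction of $\nabla\zeta$ against $\nabla v_i$ whose normal component carries the factor $\frac{\partial\zeta}{\partial\nu}\cdot\nu$ — and uses $\nabla v_i\cdot\dot\nu=-\nabla_{\partial\Omega_0}v_i\cdot\nabla_{\partial\Omega_0}(\zeta\cdot\nu)$. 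Substituting $\dot h$ and $h_0=\frac{\partial v_1}{\partial\nu}\frac{\partial v_2}{\partial\nu}$ into the boundary transport formula and then integrating by parts tangentially on $\partial\Omega_0$ to move all derivatives off $\zeta$, one gathers the resulting terms into the three families appearing in \eqref{jay2}: the curvature term $K\frac{\partial v_1}{\partial\nu}\frac{\partial v_2}{\partial\nu}$, the tangential gradient of $\frac{\partial v_1}{\partial\nu}\frac{\partial v_2}{\partial\nu}$, and the doubly-normal term in $\frac{\partial\zeta}{\partial\nu}\cdot\nu$. This bookkeeping can also be organized in the style of Lemma \ref{fondamentale} above, by differentiating the explicit pull-back expression for $J_{2,\phi}$ on the fixed domain $\Omega$ and integrating by parts; it parallels the computations for $M$ and $L$ in \cite[Lemma~4.4]{bupro} and \cite[Lemma~2.6]{buosotesi} and the Steklov-Laplace case of \cite{lambertisteklov}. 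Throughout, the assumptions $v_1,v_2\in H^{4}(\tilde\phi(\Omega))$ and $\tilde\phi(\Omega)\in C^2$ guarantee that every boundary integrand above has a well-defined trace and that all the tangential integrations by parts are legitimate.
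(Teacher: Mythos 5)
Your treatment of \eqref{jay1} and \eqref{jay3} is fine and essentially parallels the paper (which gets \eqref{jay1} directly from \eqref{der4} and cites \cite[Lemma 3.3]{lambertisteklov} for \eqref{jay3}). The problem is \eqref{jay2}, where your argument has a genuine gap: you never use the fact that $J_2$ is only relevant for functions vanishing on the boundary, and without that ingredient the ``bookkeeping'' you defer does not close. The paper's proof begins precisely by observing that $\nabla u=\frac{\partial u}{\partial\nu}\nu$ on $\partial\Omega$ for $u\in H^2(\Omega)\cap H^1_0(\Omega)$ (the space on which the form $J_2$ is actually used, namely for problem \eqref{ksgen}), so that $J_2[u_1][u_2]=\int_{\partial\Omega}\nabla u_1\cdot\nabla u_2\,d\sigma$; this identity is what makes the pulled-back functional differentiable in a form free of the moving normal acting on the functions, and it is used again at the end of \eqref{uu} to reduce $\frac{\partial v_1}{\partial y_r}\left(\frac{\partial\zeta_r}{\partial y_s}+\frac{\partial\zeta_s}{\partial y_r}\right)\frac{\partial v_2}{\partial y_s}$ to $2\frac{\partial v_1}{\partial\nu}\frac{\partial v_2}{\partial\nu}\frac{\partial\zeta}{\partial\nu}\cdot\nu$. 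In your Eulerian computation with $h_t=(\nabla v_{1,t}\cdot\nu_t)(\nabla v_{2,t}\cdot\nu_t)$ for \emph{general} $v_1,v_2$, the terms coming from $\dot\nu=-\nabla_{\partial\tilde\phi(\Omega)}(\zeta\cdot\nu)$ and from $\nabla v_i\cdot\frac{\partial\zeta}{\partial\nu}$ produce contributions of the form
\begin{equation*}
-\frac{\partial v_2}{\partial\nu}\,\nabla_{\partial\tilde\phi(\Omega)}v_1\cdot\nabla_{\partial\tilde\phi(\Omega)}(\zeta\cdot\nu)
-\frac{\partial v_1}{\partial\nu}\,\nabla_{\partial\tilde\phi(\Omega)}v_2\cdot\nabla_{\partial\tilde\phi(\Omega)}(\zeta\cdot\nu)
\quad\text{(and tangential pieces of }\nabla v_i\cdot\tfrac{\partial\zeta}{\partial\nu}\text{)},
\end{equation*}
which involve the tangential gradients of the $v_i$ themselves. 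These cannot be rewritten in terms of the three families appearing in \eqref{jay2} (no term there carries tangential derivatives of $\zeta\cdot\nu$ against $\nabla_{\partial\tilde\phi(\Omega)}v_i$, and tangential integration by parts only trades them for second tangential derivatives of $v_i$, equally absent). They disappear exactly when $v_1,v_2$ vanish on $\partial\tilde\phi(\Omega)$, i.e.\ when $\nabla v_i=\frac{\partial v_i}{\partial\nu}\nu$ there — the hypothesis the paper exploits from the first line.

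So to repair your argument you must (i) state and use the boundary condition $u_1,u_2\in H^1_0(\Omega)$ (implicit in the lemma through the role of $J_2$ for problem \eqref{ksgen}), and (ii) actually carry out the computation you describe as ``laborious'': as written, the decisive step — substituting $\dot h$ into the transport formula and gathering terms — is asserted rather than performed, and it is precisely there that the claim fails in the generality you allow. Once the vanishing of $v_i$ on the boundary is imposed, your route (transport formula plus $\dot\nu$) and the paper's route (differentiating the explicit pull-back of $\int_{\partial\Omega}\nabla u_1\cdot\nabla u_2\,d\sigma$ and then applying the tangential Green's formula) both lead to \eqref{jay2}, and are of comparable length.
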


\begin{proof}
Formula \eqref{jay1} is immediate from \eqref{der4}, while for formula \eqref{jay3} we refer to \cite[Lemma 3.3]{lambertisteklov}.

Regarding formula \eqref{jay2}, we start observing that
\begin{equation*}
J_2[u_1][u_2]=\int_{\partial\Omega}\nabla u_1\cdot\nabla u_2d\sigma,
\end{equation*}
since $\nabla u=\frac{\partial u}{\partial\nu}\nu$ for any $u\in H^2(\Omega)\cap H^1_0(\Omega)$. Hence
\begin{equation*}
J_{2,\phi}[u_1][u_2]=\int_{\partial\Omega}\left(\nabla u_1\cdot\nabla(\phi^{-1})\right)\cdot\left(\nabla u_2\cdot\nabla(\phi^{-1})\right)|\nu\cdot\nabla(\phi^{-1})||\det \nabla\phi|d\sigma,
\end{equation*}
from which we get
	\begin{multline}\label{uu}
	d|_{\phi=\tilde{\phi}}J_{2,\phi}[\psi][u_1][u_2]
=-\int_{\partial\tilde{\phi}(\Omega)}\frac{\partial v_1}{\partial y_r}\left(\frac{\partial \zeta_r}{\partial y_s}+\frac{\partial \zeta_s}{\partial y_r}\right)\frac{\partial v_2}{\partial y_s} d\sigma
+\int_{\partial\tilde{\phi}(\Omega)}\nabla v_1\cdot\nabla v_2{\rm div\ } \zeta d\sigma\\
-\int_{\partial\tilde{\phi}(\Omega)}\nabla v_1\cdot\nabla v_2 \frac{\partial \zeta}{\partial \nu}\cdot\nu d\sigma
=-2\int_{\partial\tilde{\phi}(\Omega)}\frac{\partial v_1}{\partial \nu}\frac{\partial v_2}{\partial \nu}\frac{\partial \zeta}{\partial \nu}\cdot\nu d\sigma
+\int_{\partial\tilde{\phi}(\Omega)}\frac{\partial v_1}{\partial \nu}\frac{\partial v_2}{\partial \nu}{\rm div}_{\partial\tilde{\phi}(\Omega)}\zeta d\sigma.	
	\end{multline}

Using the Tangential Green's Formula (cf.\ \cite[\S 8.5]{delfour}) we have
\begin{multline}\label{uuu}
\int_{\partial\tilde{\phi}(\Omega)}\frac{\partial v_1}{\partial \nu}\frac{\partial v_2}{\partial \nu}{\rm div}_{\partial\tilde{\phi}(\Omega)}\zeta d\sigma
=\int_{\partial\tilde{\phi}(\Omega)}K\frac{\partial v_1}{\partial \nu}\frac{\partial v_2}{\partial \nu}\zeta\cdot\nu d\sigma\\
-\int_{\partial\tilde{\phi}(\Omega)}\nabla_{\partial\tilde{\phi}(\Omega)}\left(\frac{\partial v_1}{\partial \nu}\frac{\partial v_2}{\partial \nu}\right)\cdot\zeta d\sigma,
\end{multline}
where $\nabla_{\partial\tilde{\phi}(\Omega)}$ is the tangential gradient. Combining \eqref{uu} and \eqref{uuu} we obtain \eqref{jay2}.
\end{proof}

	{\bf Acknowledgements.} The author is very thankful to Prof.\ Pier Domenico Lamberti and Dr.\ Luigi Provenzano for useful comments and discussions. The author has been partially supported by the research project
`Singular perturbation problems for differential operators' Progetto di Ateneo
of the University of Padova, and by the research project FIR (Futuro in Ricerca) 2013 `Geometrical and qualitative aspects of PDE's'.  The author is a  member of the Gruppo Nazionale
per l'Analisi Matematica, la Probabilit\`a e le loro Applicazioni (GNAMPA) of
the Istituto Nazionale di Alta Matematica (INdAM).
	

$ $

\noindent {
Davide Buoso \\
Dipartimento di Scienze Matematiche ``G.L.\ Lagrange''\\
Politecnico di Torino\\
corso Duca degli Abruzzi, 24\\
10129 Torino\\
Italy\\
e-mail:	davide.buoso@polito.it
}

\end{document}